\let\orig@item=\@item \def\@item[#1]{\orig@item[\rm #1]}\makeatother
\let\origcaption=\caption \renewcommand\caption[1]{\parbox{0.66\textwidth}{\origcaption{#1}}}
\newtheorem{theorem}{Theorem}[section]
\newtheorem{lemma}[theorem]{Lemma}
\newtheorem{proposition}[theorem]{Proposition}
\newtheorem{remark}[theorem]{Remark}
\DeclareMathOperator{\diff}{d}
\DeclareMathOperator{\pv}{pv}
\renewcommand\({\left(}
\renewcommand\){\right)}
\def\nc{\newcommand}
\def\be{\beta}
\def\ra{\rightarrow}
\def\la{\leftarrow}
\nc\pa{\partial}
\nc\CC{\mathbb{C}}
\nc\RR{\mathbb{R}}
\nc\QQ{\mathbb{Q}}
\nc\ZZ{\mathbb{Z}}
\nc\NN{\mathbb{N}}
\def\ba{\begin{align}}
\def\bad{\begin{aligned}}
\def\be{\begin{equation}}
\def\ea{\end{align}}
\def\ead{\end{aligned}}
\def\ee{\end{equation}}
\def\e{\eqref}
\def\dalpha{\diff \! \alpha}
\def\dt{\diff \! t}
\def\dx{\diff \! x}
\def\dxdt{\dx \dt}
\def\fract{\frac{\diff}{\dt}}
\def\defn{\mathrel{:=}}
\def\eps{\varepsilon}
\def\la{\left\vert}
\def\lA{\left\Vert}
\def\bla{\big\vert}
\def\le{\leq}
\def\les{\lesssim}
\def\mez{\frac{1}{2}}
\def\ra{\right\vert}
\def\rA{\right\Vert}
\def\bra{\big\vert}
\def\tdm{\frac{3}{2}}
\def\xN{\mathbb{N}}
\def\xR{\mathbb{R}}
\def\xS{\mathbb{S}}
\def\xZ{\mathbb{Z}}
\def\a{\alpha}
\def\XXint#1#2#3{{\setbox0=\hbox{$#1{#2#3}{\int}$ }
		\vcenter{\hbox{$#2#3$ }}\kern-.6\wd0}}
	\title{On the dynamics of the roots of polynomials under differentiation}
\author{Thomas Alazard, Omar Lazar, Quoc-Hung Nguyen}
\date\empty
\begin{document}

\maketitle

\begin{abstract}

This article is devoted to the study of a nonlinear and nonlocal parabolic equation introduced by 
Stefan Steinerberger to study the roots of polynomials under differentiation; it also appeared in a work 
by Dimitri Shlyakhtenko and Terence Tao on free convolution. 
Rafael Granero-Belinch\'on obtained a global well-posedness result for initial data small enough in a Wiener space, and recently 
Alexander Kiselev and Changhui Tan proved a global well-posedness result for any initial data in the Sobolev space $H^s(\xS)$ with $s>3/2$. 
In this paper, we consider the Cauchy problem in the critical space $H^{1/2}(\xS)$. Two interesting new features, at this level of regularity, are that 
the equation can be written in the form
$$
\partial_t u+V\partial_x u+\gamma \Lambda u=0,
$$
where $V$ is not bounded and $\gamma$ is not bounded from below. Therefore, the equation is only weakly parabolic. 
We prove that nevertheless the Cauchy problem is well posed locally in time and that the solutions are smooth for positive times. 
Combining this with the results of Kiselev and Tan, this gives a global well-posedness result for any initial data in $H^{1/2}(\xS)$. 
Our proof relies on sharp commutators estimates and introduces a strategy to prove a local well-posedness result in a situation  
where the lifespan depends on the profile of the initial data and not only on its norm.
\end{abstract}

\section{Introduction}

Stefan Steinerberger studied in \cite{Steinerberger-2018} 
the following question: 
Considering a polynomial $p_n$ of degree $n$ having all its roots 
roots on the real line distributed according to a smooth function $u_0(x)$, and a real-number $t\in (0,1)$, 
how is the distribution of the roots of the derivatives $\partial_x^k p_n$ distributed with $k=\lfloor t\cdot n\rfloor$? 
This question led him to discover a nice nonlocal nonlinear equation of the form
\be\label{0}
\partial_tu+\frac{1}{\pi}\partial_x\left( \arctan \left(\frac{Hu}{u}\right)\right)=0,
\ee
where the unknown $u=u(t,x)$ is a positive real-valued function. 

Besides its aesthetic aspect, this equation has many interesting features. 
Shlyakhtenko and Tao \cite{S-Tao-2020} derived the same equation in the context of free probability and random matrix theory 
(see also \cite{Steinerberger-3}). However, our motivation comes from the links between this equation and many models studied in fluid dynamics.

In this paper, we assume that 
the space variable $x$ belongs to the circle $\xS=\xR/(2\pi\xZ)$, and $H$ is the  
circular Hilbert transform (which acts on periodic functions), 
defined by
\be\label{11}
H g(x)=
\frac{1}{2\pi}\pv
\int_{\xS} \frac{g(x)-g(x-\alpha)}{\tan(\alpha/2)}\dalpha,
\ee
where the integral is understood as a principal value. Granero-Belinch\'on (\cite{Granero2018}) 
proved the local existence of smooth solutions 
for initial data $u_0$ in the Sobolev space $H^2(\xS)=\{u\in L^2(\xS)\,;\, \partial_{x}^2u\in L^2(\xS)\}$, 
as well as the global existence under a 
condition in appropriate Wiener spaces. 
Then, Kiselev and Tan proved \cite{Kiselev-Tan} that the Cauchy problem for \e{0} is globally well-posed in the homogeneous Sobolev space $\dot H^s(\xS)$ for all $s>3/2$, 
where $\dot H^s(\xS)=\{u\in L^2(\xS)\,;\, \Lambda^s u\in L^2(\xS)\}$ where $\Lambda$ denotes the fractional Laplacian:
$$
\Lambda=\partial_x H=(-\partial_{xx})^\mez.
$$

In fact, the equation \e{0} enters the family of fractional parabolic equations, which has attracted a lot of attention in recent years. 
To see this, introduce the coefficients
$$
V=-\frac{1}{\pi}\frac{Hu}{u^2+(Hu)^2}\quad,\quad 
\gamma=\frac{1}{\pi}\frac{u}{u^2+(Hu)^2}\cdot
$$
Then the equation~\e{0} has the following form
\begin{equation}\label{1}
 \partial_tu+V\partial_x u+\gamma \Lambda u=0.
\end{equation}
This last equation shares many characteristics 
with the Hele-Shaw equation, the Muskat equation, the 
dissipative quasi-geostrophic equation, and  1D models for the 3D incompressible Euler equation (\cite{CLM-CPAM1985}), to name a few. 
Many different techniques have been introduced to study these problems. 

For the dissipative SQG equation, the global regularity 
has been proved by Kiselev, Nazarov and Volberg~\cite{KNV-2007}, Caffarelli-Vasseur~\cite{Caffarelli-Vasseur-AoM} 
and 
Constantin-Vicol~\cite{CV-2012} (see also~\cite{KN-2009,Silvestre-2012,VaVi,NgYa}). The nonlinearity in the Muskat equation is more 
complicated. However, 
Cameron has succeeded in~\cite{Cameron} to apply 
the method introduced by Kiselev-Nazarov-Volberg to prove 
the existence of global solutions in time 
when the product of the maximum and minimum slopes is less than $1$. 
Recently, many works have extended this last result. 
The main results in \cite{AN1,AN2,AN3,AN4} imply that 
the Cauchy problem can be solved for non-Lipschitz initial 
data, following earlier work 
by Deng, Lei and Lin~\cite{DLL}, Cameron~\cite{Cameron}, 
C\'ordoba and Lazar~\cite{Cordoba-Lazar-H3/2}, Gancedo 
and Lazar~\cite{Gancedo-Lazar-H2} which allowed arbitrary slopes 
of large size. Recently, in \cite{CNX-2021} the third author proved 
local existence with $C^1$ initial data. 
The main issue is that it is difficult to dispense with the 
assumption of finite slopes. Indeed, the classical nonlinear estimates 
require to control the $L^\infty$ norm of some factors, 
which is the same for the Muskat problem 
to control the $L^\infty$ norm of the slope, which in turn 
amounts to controlling 
the Lipschitz norm of~$f$. 
Second, the Muskat equation is a degenerate parabolic equation 
for solutions which 
are not controlled in the Lipschitz norm.   We also refer
interested readers to \cite{KeHung1} for another non-local parabolic equation.

Inspired by these results, our goal here is to solve the Cauchy problem 
for \e{0} 
in the critical Sobolev space $H^\mez(\xS)$. 
Several interesting difficulties appear at that level of regularity. 

The main result of this paper is the following

\begin{theorem}\label{T1}
For all initial data $u_0$ in $\dot H^{\frac{1}{2}}(\mathbb{T})$ 
such that $\inf u_0>0$, 
the Cauchy problem has a  global in time solution $u$ 
satisfying the following properties:
\begin{enumerate}[(i)]
\item $u\in C^0([0,+\infty);\dot{H}^\mez(\xS))\cap L^2((0,+\infty);\dot{H}^1(\xS))$ 
together with the estimate
\begin{equation}\label{51}
	\sup_{t>0}\lA u(t)\rA_{\dot H^{\frac{1}{2}}}^2
	+\int_0^{\infty}\int_\xS \frac{u|\Lambda u|^2}{u^2+(Hu)^2}\dxdt
	\leq 10 \lA u_0\rA_{\dot H^{\frac{1}{2}}}^2.
\end{equation}
\item\label{i(ii)}  $u\in C^\infty((0,+\infty)\times \xS)$ and moreover 
for any $s>0$ and $\eps_0>0$ there exists a constant 
$C(s,\eps_0)$ such that
\begin{equation*}
\sup_{t>0} t^{s+\epsilon_0}\lA u(t)\rA_{\dot H^{\frac{1}{2}+s}}\le C(s,\eps_0)\lA u_0\rA_{\dot{H}^\mez}.
\end{equation*}
\item $\displaystyle\inf_{x\in\xS} u(t,x)\geq \displaystyle\inf_{x\in\xS} u_0(x)$ for $t>0$.
\end{enumerate}
\end{theorem}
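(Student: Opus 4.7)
My plan proceeds in four stages: (1) derive the a priori $\dot H^{\mez}$ energy estimate \eqref{51} via a commutator manipulation; (2) build a local-in-time solution at the critical regularity by a regularization/compactness argument whose lifespan depends on the initial profile; (3) use parabolic smoothing to upgrade to $C^\infty$ for $t>0$, yielding (ii); (4) invoke the Kiselev--Tan theorem at any positive time to continue the solution globally.

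For stage (1), test the equation against $\Lambda u$. Computing $\partial_x\arctan(Hu/u) = (u\Lambda u - Hu\,\partial_x u)/(u^2+(Hu)^2)$, setting $w = \Lambda u$, and using $\partial_x u = -Hw$, one obtains
$$\frac{1}{2}\frac{d}{dt}\|u\|_{\dot H^{\mez}}^2 + \int_\xS \gamma\, w^2\,\dx = \int_\xS V\, Hw\cdot w\,\dx.$$
The skew-symmetry of $H$ converts the right-hand side into $\tfrac12\int_\xS [V,H]w\cdot w\,\dx$, which is the pivotal step: one derivative is effectively transferred off $w$ onto $V$, so Calder\'on-type bounds can absorb the transport term into the dissipation and yield \eqref{51} with the factor $10$. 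The lower bound (iii) follows from a maximum principle applied where $u$ attains its minimum, since there $\Lambda u\ge 0$ and the equation becomes transport-like.

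For stage (2), standard fixed-point arguments do not yield a lifespan controlled purely by $\|u_0\|_{\dot H^{\mez}}$, as the abstract emphasizes. I would produce smooth approximants $u^\eps$ by a truncation of the coefficients $V,\gamma$ (or by adding a small hyperdissipation $\eps\Lambda^2$), derive uniform profile-dependent bounds on a short interval from the energy identity of stage (1), and pass to the limit using Aubin--Lions compactness in $C^0_t L^2_x$. Uniqueness in $\dot H^{\mez}$ I would obtain from a difference estimate in $L^2$, again relying on the commutator structure to avoid derivative losses. Stage (3) then follows by a bootstrap in the time-weighted norms $t^{s+\eps_0}\|u\|_{\dot H^{\mez+s}}$: differentiating the equation and applying the same commutator machinery at each regularity level yields the constants $C(s,\eps_0)$ inductively. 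Stage (4) is immediate, since at any $t_0>0$ the solution lies in $H^s$ for arbitrary $s>3/2$.

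The main obstacle is the commutator estimate in stage (1). At $\dot H^{\mez}$, $V$ is unbounded and the dissipation weight $u/(u^2+(Hu)^2)$ degenerates precisely in the region where $V$ is largest, so a crude Cauchy--Schwarz fails. A pointwise dyadic splitting according to $|Hu|\lesssim u$ vs.\ $|Hu|\gg u$ combined with sharp bilinear Hilbert-transform estimates at the critical Sobolev level should close it, but the bookkeeping is where the technical core of the paper lies. The secondary obstacle is engineering a \emph{profile-dependent} lifespan in stage (2), a feature specific to critical problems and the main methodological novelty claimed by the abstract.
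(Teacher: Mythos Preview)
Your four-stage outline matches the paper's architecture closely, and the energy identity plus commutator rewriting in stage (1) is exactly what the paper does for its toy model in \S2. However, two of your core technical claims diverge from what the paper actually proves, and one of them is a genuine gap.

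\textbf{The degenerate commutator.} You propose to close the commutator estimate by a ``pointwise dyadic splitting according to $|Hu|\lesssim u$ vs.\ $|Hu|\gg u$ combined with sharp bilinear Hilbert-transform estimates.'' This is not what the paper does, and it is not clear such a splitting would work, since in the bad region $|Hu|\gg u$ both the degeneracy of $\gamma$ and the size of $V$ are at their worst simultaneously. The paper's mechanism is different and is precisely what produces the profile-dependent lifespan you flag as the ``secondary obstacle'': one does \emph{not} estimate $u$ directly, but rather $v\defn u-e^{(\eps+\delta)\partial_x^2}u_0$, which satisfies $\|v(0)\|_{\dot H^{1/2}}=\|u_0-e^{\eps\partial_x^2}u_0\|_{\dot H^{1/2}}\to 0$ as $\eps\to 0$. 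One then splits $H=H_{\kappa,1}+H_{\kappa,2}$ into a local piece (cutoff at scale $\kappa$) and a smoothing remainder. Writing $\Lambda v=W/\sqrt{\gamma}$ with $W=\sqrt{\gamma}\,\Lambda v$, the commutator $[V,H_{\kappa,1}]\Lambda v$ is estimated directly from its kernel, and after some algebra on $\delta_\alpha V$ the bound comes out as $\|v\|_{\dot H^{1/2}}(1+\|v\|_{\dot H^{1/2}})^2(1+\|u\|_{\dot H^{1/2}})\|W\|_{L^2}^2$ plus terms controllable by choice of $\kappa$. The crucial factor is $\|v\|_{\dot H^{1/2}}$, not $\|u\|_{\dot H^{1/2}}$: choosing $\eps$ small makes this factor small enough to be absorbed by the dissipation $\|W\|_{L^2}^2$, and this choice of $\eps$---depending on how fast $\|u_0-e^{\eps\partial_x^2}u_0\|_{\dot H^{1/2}}$ decays---is what makes the lifespan profile-dependent. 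Your proposal does not contain this idea, and without it stage (2) cannot close.

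\textbf{Uniqueness.} You assert that uniqueness in $\dot H^{1/2}$ follows ``from a difference estimate in $L^2$, again relying on the commutator structure.'' The paper does \emph{not} prove this: Theorem~\ref{Tu} only gives uniqueness for $u_0\in \dot H^{1/2+a}$ with $a>0$, and the Remark following it states explicitly that uniqueness in $\dot H^{1/2}$ remains open. So this part of your proposal overclaims.
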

\begin{remark}
The main difficulty is that 
the coercive quantity that appears in the left-hand member of \e{51}, i.e.
\be\label{co}
\int_0^{\infty}\int_\xS \frac{u|\Lambda u|^2}{u^2+(Hu)^2}\dxdt,
\ee
is insufficient to control 
the $\lA \cdot\rA_{L^2_t\dot{H}_x^1}$-norm of $u$. 
Indeed, even if we assume that the initial value of $u$ is bounded 
(which propagates in time), since the Hilbert transform is 
not bounded from $L^\infty(\xS)$ to itself, we would 
have no control from below of the denominator $u^2+(Hu)^2$.
\end{remark}
Regarding uniqueness, we have the following theorem,
\begin{theorem}\label{Tu}
For any $a>0$ and initial data $u_0 \in \dot H^{\frac{1}{2}+a}(\mathbb{S})$, 
such that $\inf u_0>0$,  the Cauchy problem has a  unique global in time solution.
\end{theorem}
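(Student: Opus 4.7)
\textbf{Proof plan for Theorem \ref{Tu}.}
Since $\dot H^{\mez+a}\subset \dot H^{\mez}$ for $a>0$, existence is given by Theorem \ref{T1}, so the content of Theorem \ref{Tu} is uniqueness (among solutions in the natural class where $u\in C([0,T];\dot H^{\mez+a})\cap L^2((0,T);\dot H^{1+a})$). My plan is to perform an $L^2$ energy estimate on the difference $w=u_1-u_2$ of two such solutions with the same initial datum, extracting coercive dissipation from the positivity $\inf u_i>0$ and closing by Gronwall's lemma.

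Writing each solution in the form \e{1} with coefficients $V_i,\gamma_i$, the function $w$ vanishes at $t=0$ and satisfies
\be\label{diffeq}
\partial_t w + V_1 \partial_x w + \gamma_1 \Lambda w = -(V_1-V_2)\partial_x u_2 - (\gamma_1-\gamma_2)\Lambda u_2.
\ee
By item (iii) of Theorem \ref{T1}, $\inf u_i(t,\cdot)\geq \inf u_0>0$, so the diffusion coefficient $\gamma_1$ is bounded below by a positive constant $c_0$. Because $a>0$, the Sobolev embedding $\dot H^{\mez+a}(\xS)\hookrightarrow L^\infty(\xS)$ ensures that $\lA u_i\rA_{L^\infty}+\lA Hu_i\rA_{L^\infty}<\infty$, yielding the pointwise Lipschitz bound
\begin{equation*}
|V_1-V_2|+|\gamma_1-\gamma_2|\lesssim |w|+|Hw|,
\end{equation*}
with a constant depending only on $\inf u_0$ and these $L^\infty$ norms.

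I then multiply \e{diffeq} by $w$ and integrate over $\xS$. Integration by parts gives $-\mez\int(\partial_x V_1)w^2\dx$ for the transport term, and for the dissipative term I would apply the pointwise C\'ordoba--C\'ordoba identity $2w\Lambda w=\Lambda(w^2)+D(w)$ with $D(w)\geq 0$; combined with $\gamma_1\geq c_0$, this yields
\begin{equation*}
\int_\xS \gamma_1 w \Lambda w \dx \geq c_0 \lA \Lambda^{\mez}w\rA_{L^2}^2 - \mez \lA \Lambda \gamma_1\rA_{L^\infty}\lA w\rA_{L^2}^2.
\end{equation*}
The right-hand side of \e{diffeq} is estimated by combining the Lipschitz bound above, the identity $\lA Hw\rA_{L^2}=\lA w\rA_{L^2}$, and H\"older's inequality. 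One arrives at a differential inequality
\begin{equation*}
\frac{d}{dt}\lA w\rA_{L^2}^2 + c_0 \lA \Lambda^{\mez}w\rA_{L^2}^2 \leq \Phi(t)\lA w\rA_{L^2}^2,
\end{equation*}
whereupon $w(0)=0$ and Gronwall force $w\equiv 0$.

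The main obstacle is showing $\Phi\in L^1_{\mathrm{loc}}$. This amounts to controlling quantities such as $\lA\partial_x V_1\rA_{L^\infty}$, $\lA\Lambda\gamma_1\rA_{L^\infty}$, $\lA\partial_x u_2\rA_{L^p}$, $\lA\Lambda u_2\rA_{L^p}$, which for small $a$ are not in $L^\infty$. I would distribute derivatives onto $w$ via the Gagliardo--Nirenberg interpolation $\lA w\rA_{L^p}\lesssim \lA w\rA_{L^2}^{1-\theta}\lA\Lambda^{\mez}w\rA_{L^2}^\theta$ and use Young's inequality to absorb the resulting bad factors into the dissipation $c_0\lA\Lambda^{\mez}w\rA_{L^2}^2$; the remaining factors of $u_1,u_2$ are then handled using $u_i\in L^2_{\mathrm{loc}}((0,\infty);\dot H^{1+a})$, the natural energy space associated with the extra regularity $a>0$, provided by a propagation-of-regularity argument as a byproduct of the existence theory in $\dot H^{\mez+a}$.
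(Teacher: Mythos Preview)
Your $L^2$-energy plan is a valid route, but it is \emph{not} how the paper proceeds. In \S\ref{S5} the paper works in $L^\infty$: writing $\overline u=u_1-u_2$ and evaluating the equation at the maximum point $x_t$ of $\overline u(\cdot,t)$, one has $\partial_x\overline u(x_t)=0$ and $\Lambda\overline u(x_t)\ge 0$, so the transport term vanishes and the diffusion has the good sign. The only troublesome quantity is $|H\overline u(x_t)|$, which enters through the coefficient differences; the paper splits the Hilbert integral at scale $\varepsilon$, bounds the near piece by $\varepsilon\,\Lambda\overline u(x_t)$ using the sign $\delta_\alpha\overline u(x_t)\ge 0$, and the far piece by $|\log\varepsilon|\,\lA\overline u\rA_{L^\infty}$. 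Optimizing $\varepsilon$ against $\lA(\partial_x u_2,\Lambda u_2)(t)\rA_{L^\infty_x}$, which has integrable blow-up at $t=0$ thanks to the smoothing estimate of Theorem~\ref{T1} (shifted to base regularity $\dot H^{(1+a)/2}$), one absorbs the near piece into the diffusion and closes by Gronwall in $L^\infty$.

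Both arguments use $a>0$ to get $Hu_1\in L^\infty$ and hence a lower bound on $\gamma_1$; the paper asserts $u\in C^0_t\dot H^{(1+a)/2}$ at the start of \S\ref{S5} just as you assume $u\in C_t\dot H^{\mez+a}$, so your caveat about propagation of regularity applies equally there. The genuine difference is the treatment of $\partial_x u_2,\Lambda u_2$: the paper places them in $L^\infty_x$ via smoothing and argues pointwise at the extremum, whereas you place them in $L^2_x$ via the energy space $L^2_t\dot H^1$ and interpolate $w$ into $L^4$. In fact, once $\gamma_1\ge c_0$ is in hand, every term in your scheme (including $\int(\partial_x V_1)w^2$ and $\int(\Lambda\gamma_1)w^2$) reduces to a multiple of $\lA u_i\rA_{\dot H^1}\lA w\rA_{L^2}\lA\Lambda^{\mez}w\rA_{L^2}$, so that $\Phi\lesssim\lA u_1\rA_{\dot H^1}^2+\lA u_2\rA_{\dot H^1}^2\in L^1_{\mathrm{loc}}$ by Theorem~\ref{T1}(i) and the space $L^2_t\dot H^{1+a}$ you invoke is not actually needed. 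Your approach is thus more elementary and bypasses the smoothing machinery, at the price of presupposing propagation of the $\dot H^{\mez+a}$ regularity up to $t=0$.
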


\begin{remark}
Our uniqueness theorem improves the one obtained by Kiselev and Tan \cite{Kiselev-Tan}. Indeed,  they proved uniqueness in the space $\dot H^{\frac{3}{2}+a}(\mathbb{S})$ while our result deals with data in the space $\dot H^{\frac{1}{2}+a}(\mathbb{S})$. The question of uniqueness in the case $\dot H^{\frac{1}{2}}(\mathbb{S})$ remains open.
\end{remark}

\paragraph*{Plan of the paper} 
Since Kiselev and Tan (\cite{Kiselev-Tan}) proved that 
the Cauchy problem for \e{0} is globally well-posed in 
the homogeneous Sobolev space $H^s(\xS)$ for all $s>3/2$, 
it will be sufficient to prove that a local well-posedness 
result, together with the fact that the solutions are 
smooth for positive times. 
We start in Section \S\ref{S2} by discussing a model 
equation, to explain one (somewhat classical) commutator 
estimate and to explain 
the main difficulty one has to cope with. Then we prove 
the local-well posedness result in \S\ref{S3} and establish 
the smoothing effect in \S\ref{S4}.

\section{Estimate in $\dot{H}^{1/2}$ for a toy model}\label{S2}

To prove Theorem~\ref{T1}, 
the main difficulty is that, even if we know that $u$ satisfies 
a maximum principle, the coefficient $\gamma$ is not bounded from 
below by a positive constant. Indeed, it is well-known that the Hilbert 
transform is not bounded on $L^\infty(\xS)$. 
This means that \e{1} is a degenerate parabolic equation. 

Although it is not essential for the rest of the paper, it helps if we 
begin by examining a model equation with some non-degenerate dissipative term. 
Our goal here is to introduce a basic 
commutator estimate which allows to deal with equations of the form~\e{1}.  

Consider the equation
\begin{equation}\label{3}
	\partial_tu+a(u,Hu)\Lambda u=b(u,Hu)H\Lambda u,
\end{equation}
where $a$ and $b$ are two $C^\infty$ real-valued functions 
defined on $\xR^2$, satisfying
$a\ge m>0$ for some given positive constant $m$, 
together with
$$
\sup_{(x,x',y,y')\in\xR^4} \frac{\la b(x,y)-b(x',y')\ra}{\la x-x'\ra+\la y-y'\ra}<+\infty.
$$

\begin{proposition}
There exists a constant $C>0$ such that, for all $T>0$ and for 
all $u\in C^1([0,T];\dot{H}^\mez(\xS))$ solution to \e{3}, there holds
\be\label{7}
\mez\fract\lA u\rA_{\dot{H}^\mez}^2+m\int_\xS \la \Lambda u\ra^2\dx\le C\lA u\rA_{\dot{H}^\mez}
\lA \Lambda u\rA_{L^2}^2. 
\ee
\end{proposition}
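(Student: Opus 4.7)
The strategy is an $\dot H^{1/2}$ energy estimate: test the equation against $\Lambda u$, use the antisymmetry of $H$ to turn the ``bad'' $H\Lambda u$ term into a commutator, and absorb the commutator using the Gagliardo representation of the $\dot H^{1/2}$-seminorm.

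\textbf{Step 1: energy identity.} Multiplying \eqref{3} by $\Lambda u$, integrating over $\xS$, and using that $\Lambda$ is self-adjoint nonnegative so that $\int u\Lambda u\,\dx=\lA u\rA_{\dot H^\mez}^2$, we obtain
\begin{equation*}
\mez\fract\lA u\rA_{\dot H^\mez}^2+\int_\xS a(u,Hu)\la\Lambda u\ra^2\dx
=\int_\xS b(u,Hu)\,\Lambda u\cdot H\Lambda u\,\dx.
\end{equation*}
Since $a\ge m>0$, the left-hand side is bounded below by $\mez\fract\lA u\rA_{\dot H^\mez}^2+m\int|\Lambda u|^2\,\dx$, so it suffices to bound the right-hand side by $C\lA u\rA_{\dot H^\mez}\lA\Lambda u\rA_{L^2}^2$.

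\textbf{Step 2: commutator trick.} Setting $f=\Lambda u$ and $\beta=b(u,Hu)$, the antisymmetry of $H$ on $L^2(\xS)$ gives $\int \beta f\cdot Hf\,\dx=-\int H(\beta f)\cdot f\,\dx=-\int[H,\beta]f\cdot f\,\dx-\int \beta Hf\cdot f\,\dx$, hence
\begin{equation*}
\int_\xS \beta\, f\cdot Hf\,\dx=-\tfrac12\int_\xS [H,\beta]f\cdot f\,\dx.
\end{equation*}
This is the key identity: the problematic term is now a commutator paired with $f$ twice, which is the structure that allows half a derivative on $\beta$ to be paid by the $\dot H^\mez$-norm of $u$.

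\textbf{Step 3: commutator estimate via the Gagliardo seminorm.} Using the kernel representation \eqref{11},
\begin{equation*}
\left|\int_\xS[H,\beta]f\cdot f\,\dx\right|\les \iint_{\xS\x\xS}\frac{|\beta(x)-\beta(y)|}{|\tan((x-y)/2)|}|f(x)||f(y)|\,\dx\dy,
\end{equation*}
and Cauchy--Schwarz with respect to the symmetric measure $\dx\dy$ yields
\begin{equation*}
\left|\int_\xS[H,\beta]f\cdot f\,\dx\right|\les \left(\iint \frac{|\beta(x)-\beta(y)|^2}{|\tan((x-y)/2)|^2}\,\dx\dy\right)^{\!\!1/2}\lA f\rA_{L^2}^2\les \lA\beta\rA_{\dot H^\mez}\lA f\rA_{L^2}^2,
\end{equation*}
where the last bound is the equivalence between the Gagliardo seminorm and $\lA\cdot\rA_{\dot H^\mez}$ on $\xS$.

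\textbf{Step 4: composition with a Lipschitz function.} Since $b$ is globally Lipschitz, $|b(u(x),Hu(x))-b(u(y),Hu(y))|\le L(|u(x)-u(y)|+|Hu(x)-Hu(y)|)$, so plugging into the Gagliardo seminorm gives $\lA b(u,Hu)\rA_{\dot H^\mez}\les \lA u\rA_{\dot H^\mez}+\lA Hu\rA_{\dot H^\mez}=2\lA u\rA_{\dot H^\mez}$, using that $H$ commutes with $\Lambda$ and is an $L^2$-isometry (modulo means). Combining with Steps 1--3 gives \eqref{7}.

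\textbf{Main obstacle.} The only point requiring care is the commutator bound in Step~3: writing it in the kernel form and seeing that a plain Cauchy--Schwarz (rather than a harder Calder\'on-commutator estimate) suffices is what makes the argument work at the exact $\dot H^\mez$ level. This is the prototype of the estimate that will have to be adapted in Section~\ref{S3}, where the coefficient $b$ is replaced by the non-Lipschitz nonlinearity appearing in~\eqref{1} and $a$ is no longer bounded below.
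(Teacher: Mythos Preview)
Your proof is correct and follows the same overall scheme as the paper: energy identity, antisymmetry of $H$ to produce a commutator, a commutator estimate, and the Lipschitz composition bound (which is the paper's Lemma~\ref{L:2.3}). The only difference is in Step~3: the paper derives the commutator bound $\lA[b,H]\Lambda u\rA_{L^2}\lesssim \lA b\rA_{\dot H^{1/2}}\lA\Lambda u\rA_{L^2}$ by invoking the classical Coifman--Rochberg--Weiss estimate $\lA[H,f]v\rA_{L^2}\lesssim\lA f\rA_{\mathrm{BMO}}\lA v\rA_{L^2}$ together with the embedding $\dot H^{1/2}\hookrightarrow\mathrm{BMO}$, whereas you bound the bilinear form directly via the kernel representation and a single Cauchy--Schwarz against the Gagliardo seminorm. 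The paper in fact explicitly remarks that this direct route is an alternative; your version is slightly more self-contained and avoids the detour through $\mathrm{BMO}$, at the cost of yielding only the bilinear estimate rather than the stronger $L^2$ operator bound on the commutator (which is not needed here anyway).
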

\begin{remark}
Using classical arguments, it is then possible to infer 
from the {\em a priori} estimate~\e{7} a global well-posedness result for 
initial data which are small 
enough in $H^\mez(\xS)$. 
However, the study of the local well-posedness of the Cauchy problem 
for large data is more difficult and requires and extra argument 
which is explained in the next section. 
\end{remark}
\begin{proof}
Let us use the short notations 
$a=a(u,Hu)$ and $b:=b(u,Hu)$. 
Multiplying the equation~\e{3} by $\Lambda u$ and integrating over $\xS$, we obtain
\be\label{6}
\mez\fract\lA u\rA_{\dot{H}^\mez}^2+\int_\xS a\la \Lambda u\ra^2\dx
=I\defn\int_\xS b(H\Lambda u)(\Lambda u)\dx. 
\ee
To estimate $I$, we exploit the fact that 
$H^*=-H$ to write
\be\label{42}
\begin{aligned}
I&=\mez \int_\xS b(H\Lambda u)( \Lambda u)\dx
-\mez\int_\xS (\Lambda u)H \big(b\Lambda u\big)\dx\\
&=\mez \int_\xS\big(\big[ b,H\big]\Lambda u\big)\Lambda u\dx.
\end{aligned}
\ee
Now we claim that 
\be\label{7-1}
\lA \big[ b,H\big]\Lambda u\rA_{L^2}\les 
\lA b\rA_{\dot{H}^\mez}\lA \Lambda u\rA_{L^2}.
\ee
Indeed, this follows from 
the 
Sobolev embedding $\dot{H}^\mez\subset \text{BMO}$ and the classical commutator estimate
\be\label{43}
\lA [H,f]v\rA_{L^2}\lesssim \lA f\rA_{\text{BMO}}\lA v\rA_{L^2}.
\ee

(Alternatively one can prove \e{7-1} 
directly using the definition of the Hilbert transform 
as a singular integral and the Gagliardo semi-norm; see below.)  It follows that
\be\label{5}
I\les \lA b \rA_{\dot{H}^\mez}
\lA \Lambda u\rA_{L^2}^2.
\ee

Now we estimate the $\dot{H}^\mez$-norm of $b$ by means of the following elementary estimate.

\begin{lemma}\label{L:2.3}
Consider a $C^\infty$ function $\sigma\colon \xR^2\rightarrow \xR$ satisfying
$$
\forall (x,x',y,y')\in \xR^4,\qquad \la \sigma(x,y)-\sigma(x',y')\ra \le K \la x-x'\ra+K\la y-y'\ra.
$$
Then, for all $s\in (0,1)$ and all $u \in \dot{H}^{s}(\xR)$, 
one has $\sigma(u,Hu)\in \dot{H}^{s}(\xR)$ together with the estimate
\be\label{comp}
\lA \sigma(u,Hu)\rA_{\dot{H}^s}\le K\lA u\rA_{\dot{H}^s}.
\ee
\end{lemma}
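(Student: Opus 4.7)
The plan is to bypass any Fourier-side or paraproduct machinery and rely purely on the Gagliardo (double-integral) characterization of the homogeneous fractional Sobolev norm, which for $s\in(0,1)$ reduces the statement to a pointwise Lipschitz bound. Concretely, I recall the formula
\begin{equation*}
\lA f \rA_{\dot H^s}^2 = C_s \iint_{\xR\x\xR} \frac{\la f(x)-f(y)\ra^2}{\la x-y\ra^{1+2s}}\dx\dy,
\end{equation*}
valid for $s\in(0,1)$ with a universal constant $C_s$; this is precisely the representation hinted at in the remark that follows \eqref{comp} in the paper.

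Applied to $f=\sigma(u,Hu)$, the hypothesis on $\sigma$ yields the pointwise estimate
\begin{equation*}
\bla f(x)-f(y)\bra \le K\bla u(x)-u(y)\bra + K\bla Hu(x)-Hu(y)\bra.
\end{equation*}
The first step of the proof is then to insert this into the Gagliardo double integral and use Minkowski's inequality in the weighted $L^2$ space $L^2(\xR^2,\la x-y\ra^{-1-2s}\dx\dy)$. This gives
\begin{equation*}
\lA \sigma(u,Hu)\rA_{\dot H^s}\le K\lA u\rA_{\dot H^s}+K\lA Hu\rA_{\dot H^s}.
\end{equation*}

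The second step is to observe that $H$ is an isometry on $\dot H^s(\xR)$, since it is the Fourier multiplier of symbol $-i\,\mathrm{sgn}(\xi)$, of modulus one; therefore $\lA Hu\rA_{\dot H^s}=\lA u\rA_{\dot H^s}$, and one concludes the bound \eqref{comp} (up to an absolute multiplicative constant that I expect the authors are absorbing into $K$).

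I do not anticipate any serious obstacle here. The only mild point worth flagging is that $u\in\dot H^s(\xR)$ does not by itself place $\sigma(u,Hu)$ in $L^2(\xR)$, so the statement must be interpreted at the level of the homogeneous seminorm; but the Gagliardo representation depends only on differences of $f$, so this causes no issue. The whole argument is essentially a two-line computation: Lipschitz composition plus the $\dot H^s$-isometry property of the Hilbert transform.
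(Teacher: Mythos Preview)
Your proposal is correct and follows essentially the same approach as the paper's own proof: apply the Lipschitz hypothesis pointwise to differences, invoke the Gagliardo seminorm characterization (the paper phrases this via $\lA\delta_\alpha\cdot\rA_{L^2}$ rather than the full double integral, but this is the same object), and then use that $H$ is an isometry on $\dot H^s$. Your observation about the factor of $2$ in the constant is also apt---the paper's own argument yields $K\lA u\rA_{\dot H^s}+K\lA Hu\rA_{\dot H^s}=2K\lA u\rA_{\dot H^s}$ just as yours does, so the stated bound \eqref{comp} should indeed be read up to an absolute constant.
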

\begin{proof}
By assumption, for any $\alpha\in \xR$, we have
$$
\lA \delta_\alpha \sigma(u,Hu)\rA_{L^2} \le K \lA \delta_\alpha u\rA_{L^2}
+\lA \delta_\alpha Hu\rA_{L^2}.
$$
Then by using the Gagliardo semi-norms, we get 
$$
\lA \sigma(u,Hu)\rA_{\dot{H}^s}\le K\lA u\rA_{\dot{H}^s}
+K\lA Hu\rA_{\dot{H}^s},
$$
and the desired result follows since 
$\lA Hu\rA_{\dot{H}^s}=\lA u\rA_{\dot{H}^s}$.
\end{proof}

The previous lemma implies that 
$$
\lA b\rA_{\dot{H}^\mez}\les \lA u\rA_{\dot{H}^\mez}+\lA Hu\rA_{\dot{H}^\mez}\les \lA u\rA_{\dot{H}^\mez},
$$
and we deduce from~\e{5} that
$$
I\les \lA u\rA_{\dot{H}^\mez}
\lA \Lambda u\rA_{L^2}^2.
$$
Therefore the wanted result \e{7} follows from~\e{6}.
\end{proof}

\section{Local well-posedness}\label{S3}

We construct solutions to \e{0} 
as limits of solutions to a sequence of 
approximate nonlinear systems. 
We divide the analysis into three parts.
\begin{enumerate}
\item We start by proving that the Cauchy problem for these systems 
systems are well posed globally 
in time and satisfy the maximum principles. In particular, 
the approximate solutions are bounded by a positive constant. 
\item Then, we show that  
the solutions of the approximate systems are 
bounded in $C^0([0,T];\dot{H}^\mez(\xS))$. 
on a uniform time interval that depends on the profile of the initial 
data (and not only on their norm).
\item The third task is to show that these approximate solutions converge to a limit which is 
a solution of the original equation. 
To do this, we use interpolation and compactness arguments.
\end{enumerate}

\subsection{Approximate systems}\label{S:approximate}

Fix $\delta\in (0,1]$ and 
consider the following approximate Cauchy problem:
\begin{equation}\label{A3}
\left\{
\begin{aligned}
&\partial_tu+\frac{1}{\pi}\frac{u\Lambda u-(Hu)\partial_x u}{\delta+u^2+(Hu)^2}-\delta \partial_x^2u=0,\\
&u\arrowvert_{t=0}=e^{\delta \partial_x^2} u_0.
\end{aligned}
\right.
\end{equation}

The following lemma states that this Cauchy problem 
has smooth solutions.

\begin{lemma}
For any positive initial data $u_0\in L^2(\xS)$ 
and for any $\delta>0$, the initial value problem~\e{A3} 
has a unique solution 
$u$ in $C^{1}([0,+\infty);H^{\infty}(\xS))$. 
This solution is such that, for all $t\in [0,+\infty)$,
\be\label{12}
\inf_{x\in\xS} u(t,x)\geq \inf_{x\in\xS} u_0(x)\quad\text{and}\quad \max_{x\in\xS}u(t,x)\le \max_{x\in\xS}u_0(x).
\ee
\end{lemma}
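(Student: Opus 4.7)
The plan hinges on observing that, for fixed $\delta>0$, system \e{A3} is a semilinear equation with \emph{strong} diffusion $-\delta\partial_x^2$, a bounded-coefficient nonlinearity
$$
F(u)\defn -\frac{1}{\pi}\frac{u\Lambda u-(Hu)\partial_x u}{\delta+u^2+(Hu)^2}
$$
(the denominator is $\geq\delta$), and an $H^\infty$ initial datum $e^{\delta\partial_x^2}u_0$ obtained by applying the circular heat semigroup to $u_0\in L^2$. At this fixed-$\delta$ level, no sharp cancellation is needed: the argument is a classical fixed-point-plus-globalisation scheme, and the mildly delicate point is the propagation of the strict positivity of $u$ so that the limit equation is meaningful.

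\textbf{Step 1: Local existence in $H^\infty$.} Pick any $s>3/2$ and rewrite \e{A3} in Duhamel form
$$
u(t)=e^{t\delta\partial_x^2}\bigl(e^{\delta\partial_x^2}u_0\bigr)+\int_0^t e^{(t-\tau)\delta\partial_x^2}F(u(\tau))\,\dtau.
$$
Because the denominator stays above $\delta$, the scalar factor $(\delta+u^2+(Hu)^2)^{-1}$ is a smooth bounded function of $(u,Hu)$; combining this with $\Lambda u,\partial_x u\in H^{s-1}$ whenever $u\in H^s$, and with the fact that $H$ is bounded on $H^s(\xS)$, one verifies (via Moser-type product estimates, valid since $s-1>\mez$) that $F$ is locally Lipschitz from $H^s(\xS)$ into $H^{s-1}(\xS)$. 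The smoothing $e^{t\delta\partial_x^2}\colon H^{s-1}\to H^s$ supplied by the heat semigroup then closes a standard Banach fixed-point argument, producing a unique solution in $C([0,T];H^s)\cap C^1([0,T];H^{s-2})$ for some $T=T(\delta,\lA u_0\rA_{L^2})>0$. Parabolic bootstrapping immediately promotes this to $u\in C^1([0,T];H^\infty(\xS))$.

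\textbf{Step 2: Maximum principles.} Since $u$ is smooth on $[0,T]\times\xS$ and $\xS$ is compact, $M(t)\defn\max_x u(t,x)$ and $m(t)\defn\min_x u(t,x)$ are Lipschitz in $t$. At a point $x_t$ where $u(t,\cdot)$ attains its maximum, $\partial_x u(t,x_t)=0$, $\partial_x^2 u(t,x_t)\leq 0$, and the singular-integral representation of $\Lambda$ gives $\Lambda u(t,x_t)\geq 0$. Substituting into \e{A3} and using $u(t,x_t)>0$,
$$
\partial_t u(t,x_t)=-\frac{1}{\pi}\frac{u\Lambda u}{\delta+u^2+(Hu)^2}(t,x_t)+\delta\partial_x^2 u(t,x_t)\le 0,
$$
and the Rademacher/envelope argument yields $M(t)\leq M(0)\leq\max u_0$. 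The symmetric calculation at a minimum point produces $m(t)\geq m(0)\geq\inf u_0$, establishing \e{12}; in particular $u$ remains strictly positive whenever $\inf u_0>0$.

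\textbf{Step 3: Globalisation.} Given the $L^\infty$ bounds of Step 2 and the coercivity of $\delta\Lambda^2$, no Sobolev norm of $u$ can blow up in finite time. Testing \e{A3} against $(-\partial_x^2)^s u$ and using the self-adjointness of $\Lambda$ yields
$$
\mez\fract \lA u(t)\rA_{\dot H^s}^2+\delta\lA u(t)\rA_{\dot H^{s+1}}^2\leq \lA F(u(t))\rA_{\dot H^{s-1}}\lA u(t)\rA_{\dot H^{s+1}},
$$
and tame estimates applied to the smooth composition $F$ bound $\lA F(u)\rA_{\dot H^{s-1}}$ by $C(\delta,\lA u\rA_{L^\infty})\lA u\rA_{\dot H^s}$. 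Absorbing the right-hand side into the dissipation and applying Gronwall produces an \emph{a priori} bound on $\lA u(t)\rA_{H^s}$ on every $[0,T]$, which iterated in $s$ extends the local solution from Step~1 to $C^1([0,+\infty);H^\infty(\xS))$. The main (mild) obstacle is the interplay between Steps 1 and 2: Step 1 requires the denominator to stay away from zero, a bound delivered sharply only by Step 2; the circularity dissolves once one observes that for the mere local construction the uniform lower bound $\delta+u^2+(Hu)^2\geq\delta$ already suffices, and the sharper positivity from Step 2 is needed only to globalise via Step 3.
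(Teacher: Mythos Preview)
Your approach---semilinear fixed-point for local existence, pointwise extremum argument for the maximum principle, and energy estimates for globalisation---is essentially the one taken in the paper. There is, however, a gap in Step~3: the claimed tame bound $\lA F(u)\rA_{\dot H^{s-1}}\le C(\delta,\lA u\rA_{L^\infty})\lA u\rA_{\dot H^s}$ is not correct as stated, because $F$ depends on $Hu$ and the Hilbert transform is unbounded on $L^\infty$, so the constant must involve at least $\lA Hu\rA_{L^\infty}$ (equivalently, some positive-order Sobolev norm of $u$). Once that constant depends on a Sobolev norm of $u$ rather than merely on $\lA u\rA_{L^\infty}$, your Gronwall argument no longer closes at the base level of the iteration, and the ``iterated in $s$'' step has no anchor. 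The paper sidesteps this by reversing your order: it globalises first at the $L^2$ level, where only the crude estimate $\lA F_\delta(u)\rA_{L^2}\lesssim_\delta \lA u\rA_{H^1}$ is needed and the $\delta\lA \partial_x u\rA_{L^2}^2$ dissipation absorbs it directly; regularity is then obtained separately by solving in $H^s$ for each $s>3/2$ and invoking uniqueness. Your Step~3 becomes correct once you insert such a low-regularity global bound as the starting point of the bootstrap.
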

\begin{proof}
The proof is classical and follows from arguments already introduced by Granero in~\cite{Granero2018}, but we repeat it for completeness. 

Fix $\delta>0$. 
The Cauchy problem \eqref{A3} has the following form
\begin{equation}\label{edo}
\partial_t u-\delta\partial_x^2 u= F_\delta,\quad u\arrowvert_{t=0}=e^{\delta \partial_x^2} u_0,
\end{equation}
where 
$$
F_\delta=-\frac{1}{\pi}\frac{u\Lambda u-(Hu)\partial_x u}{\delta+u^2+(Hu)^2}\cdot
$$

{\em Step 1: existence of mild solution locally in time}. 
Since $H^{\mez+\nu}(\xS)\subset L^\infty(\xS)$ 
for all $\nu>0$ and since the circular Hilbert transform 
$H$ is bounded on 
$H^{\mez+\nu}(\xS)$, 
it is easy to verify that
\begin{align*}
\lA F_\delta(u)-F_\delta(v)\rA_{L^2}&\les_{\delta} \big(\lA u\rA_{H^{\mez+\nu}}
+\lA v\rA_{H^{\mez+\nu}}\big)\lA u-v\rA_{H^1}\\
&\quad+\big(\lA u\rA_{H^1}
+\lA v\rA_{H^1}\big)\lA u-v\rA_{H^{\mez+\nu}},
\end{align*}
where the notation $\les_{\delta}$ is intended to indicate that the 
implicit constant depends on $\delta$. 
Therefore, it follows from the interpolation inequality in Sobolev space, from the fixed point theorem and from 
the usual energy estimate for the heat equation that 
the Cauchy problem has a unique mild 
solution in $C^{0}([0,T_\delta);L^2(\xS))\cap L^2(0,T_\delta;H^1(\xS))$, where 
$T_\delta$ is estimable from below in terms of $\lA u\arrowvert_{t=0}\rA_{L^2}$ (see \cite[Section 15.1]{Taylor3}). 
In particular, we have the following alternative: either
\be\label{A4}
T_\delta=+\infty\qquad\text{or}\qquad \limsup_{t\rightarrow T_\delta} \lA u(t)\rA_{L^2}=+\infty.
\ee

{\em Step 2 : Global well-posedness} 
On the other hand, directly from the obvious estimate
\begin{align*}
\lA F_\delta\rA_{L^2}\les_{\delta} \lA u\rA_{H^{1}},
\end{align*}
the energy estimate for the heat equation implies that 
$\limsup_{{t\rightarrow T_n}} \lA u(t)\rA_{L^2}=+\infty$ 
is impossible with $T_n<+\infty$. This proves that the solution exists globally in time. 

{\em Step 3: Regularity.} We verify that the solution defined above 
is regular by noting that one can solve the Cauchy problem in $H^s(\xS)$ 
for all $s>3/2$ using the usual nonlinear estimates in Sobolev 
spaces and the argument above. By uniqueness, this implies that 
the mild solution defined above is continuous in time with values in $H^s(\xS)$ for all $s$.

{\em Step 4: maximum principle.} 
The claim \e{12} 
follows from the classical arguments. Firstly, notice that
$$
\inf_{x\in\xS}e^{\delta\partial_x^2}u_0(x)\ge \inf_{x\in\xS}u_0(x).
$$
On the other hand, at a point $x_t$ where the function $u(t,\cdot)$ reaches its minimum, we have
$$
\partial_x u(t,x_t)=0,\quad \partial_x^2 u(t,x_t)\ge 0, \quad 
\Lambda u(t,x_t)\le 0,
$$
where the last inequality follows from the fact that
$$
\Lambda u(x)
=\frac{1}{4\pi}\pv\int_{\xS}\frac{u(x)-u(x-\a)}{\sin(\alpha/2)^2}\dalpha.
$$
It follows that $\inf_{x\in\xS} u(t,x)\ge \inf_{x\in\xS}u_0(x)$. 

By similar arguments, we obtain the second inequality 
$\sup_{x\in\xS} u(t,x)\le \sup_{x\in\xS}u_0(x)$.
This completes the proof.
\end{proof}

\subsection{Uniform estimates}

Fix $\delta>0$ and $c_0$ and consider an initial data 
$u_0$ in $H^\mez(\xS)$ with $\inf_{x\in\xS}u_0(x)\ge c_0$. 
As we have seen in the previous paragraph, there exists a unique 
function 
$u\in C^1([0,+\infty);H^\infty(\xS))$ satisfying 
\begin{equation}\label{2}
\left\{
\begin{aligned}
&\partial_tu
+\frac{1}{\pi}\frac{u\Lambda u-(Hu)\partial_x u}{\delta+u^2+(Hu)^2}-\delta \partial_x^2u=0,\\
&u\arrowvert_{t=0}=e^{\delta \partial_x^2} u_0,\\
&\inf_{x\in\xS}u(t,x)\ge c_0.
\end{aligned}
\right.
\end{equation}

We shall prove estimates which are uniform 
with respect to $\delta\in (0,1]$ (this is why 
we are writing 
simply $u$ instead of $u_\delta$, to simplify notations). 

Let $\eps>0$. We want to estimate
$$
v\defn u-e^{(\eps+\delta)\partial_x^2}u_0.
$$
Set 
$$
u_{0,\eps}=e^{(\eps+\delta)\partial_x^2}u_0,
$$
and introduce the coefficients 
\begin{align*}
\gamma&=\frac{1}{\pi}\frac{u}{\delta+u^2+(Hu)^2},\\	V&=-\frac{1}{\pi}\frac{Hu}{\delta+u^2+(Hu)^2},\\
\rho&=\sqrt{\delta+u^2+(Hu)^2}.
\end{align*}
With the previous notations, we have
\be\label{4}
\partial_tv+V\partial_x v+\gamma\Lambda v-\delta\partial_x^2 v
=R_\eps(u,u_{0})
\ee
where 
\begin{equation*}
	R_\eps(u,u_{0})=-\gamma\Lambda u_{0,\eps}-V\partial_x u_{0,\eps}+\delta \partial_x^2 u_{0,\eps}.
\end{equation*}

\begin{lemma}\label{L:3.2}
For any $u_0\in H^\mez(\xS)$ with $\inf_{x\in\xS}u_0(x)>0$, 
there exist a constant $\varepsilon_0$ and a  
function $T\colon (0,1]\to (0,1)$ with
$$
\lim_{\eps\to 0}T(\eps)=0,
$$
such that the following result holds: 
for all $\delta\in (0,1]$, all $u\in C^1([0,+\infty);H^\infty(\xS))$ 
satisfying~\e{2} with initial data $u\arrowvert_{t=0}=e^{\delta\partial_x^2}u_0$, 
and for all $\varepsilon\in (0,\varepsilon_0]$, the function 
$v=u-e^{(\eps+\delta)\partial_x^2}u_0$ satisfies
\begin{align}\label{z1}
\sup_{t\in [0,T(\eps)]} \lA v(t)\rA_{\dot H^{\frac{1}{2}}}^2
+\int_{0}^{T(\eps)}\int_{\xS}\frac{u\la\Lambda v\ra^2}{\delta+u^2+(Hu)^2}\dx\dt
+\delta \int_{0}^{T(\eps)} \lA v\rA_{\dot{H}^\tdm}^2\dt \leq
\mathcal{F}(T(\eps)),
\end{align}
for some function $\mathcal{F}\colon\xR_+\to\xR_+$ with $\lim_{\tau\to 0}\mathcal{F}(\tau)=0$.
\end{lemma}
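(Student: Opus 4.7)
The plan is to derive an $\dot{H}^{1/2}$ energy estimate on $v$ in the spirit of Section~\ref{S2}, and close it by a short-time continuity argument exploiting two ingredients specific to this problem: the profile-dependent smallness of $v(0)$ in $\dot{H}^{1/2}$, and the smoothness of the regularized background $u_{0,\varepsilon}=e^{(\varepsilon+\delta)\partial_x^2} u_0$. Pairing \eqref{4} with $\Lambda v$ in $L^2(\xS)$ yields the identity
\begin{equation*}
\tfrac{1}{2}\tfrac{d}{dt}\|v\|_{\dot{H}^{1/2}}^2 + \int_{\xS} \gamma |\Lambda v|^2\,\dx + \delta \|v\|_{\dot{H}^{3/2}}^2 = -\int_{\xS} V(\partial_x v)(\Lambda v)\,\dx + \int_{\xS} R_\varepsilon \Lambda v\,\dx,
\end{equation*}
whose left-hand side already encodes the three dissipative quantities appearing in \eqref{z1}.

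For the convective term I would use $\partial_x=-H\Lambda$ on mean-zero functions and the antisymmetry trick \eqref{42} to rewrite it as $\tfrac{1}{2}\int[V,H]\Lambda v \cdot \Lambda v\,\dx$; the commutator estimate \eqref{43}, the embedding $\dot{H}^{1/2}\hookrightarrow \mathrm{BMO}$, and Lemma~\ref{L:2.3} applied to a truncation of $\sigma(x,y)=-y/[\pi(\delta+x^2+y^2)]$ on $\{x\geq c_0/2\}$ (where $u$ lives by the maximum principle) give a bound $C_{c_0}(\|v\|_{\dot{H}^{1/2}}+\|u_0\|_{\dot{H}^{1/2}})\|\Lambda v\|_{L^2}^2$, uniformly in $\delta$. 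For the source, duality gives $\int R_\varepsilon \Lambda v\,\dx \leq \|R_\varepsilon\|_{\dot{H}^{1/2}}\|v\|_{\dot{H}^{1/2}}$; the uniform bounds $\|V\|_{L^\infty}, \|\gamma\|_{L^\infty}\leq 1/(2\pi c_0)$ (from $u^2+(Hu)^2\geq 2u|Hu|$ and $u\geq c_0$) and the heat-smoothing $\|u_{0,\varepsilon}\|_{\dot{H}^s}\lesssim (\varepsilon+\delta)^{(1-2s)/4}\|u_0\|_{\dot{H}^{1/2}}$ yield $\|R_\varepsilon\|_{\dot{H}^{1/2}}\leq \kappa(\varepsilon)$, finite but $\varepsilon$-dependent. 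The decisive feature of the initial perturbation is that $v(0)=(I - e^{\varepsilon\partial_x^2})e^{\delta\partial_x^2} u_0$ satisfies $\|v(0)\|_{\dot{H}^{1/2}}\to 0$ as $\varepsilon\to 0$ uniformly in $\delta\in(0,1]$ (by dominated convergence in frequency applied to $|\xi|(1-e^{-\varepsilon\xi^2})^2e^{-2\delta\xi^2}|\widehat{u_0}(\xi)|^2$), at a rate depending on the \emph{profile} of $u_0$ (how its $\dot{H}^{1/2}$ mass is distributed in frequency) and not merely on its norm.

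The main obstacle, and what forces the lifespan to depend on the profile rather than on the norm, is that $\gamma$ is not bounded from below: the Hilbert transform being unbounded on $L^\infty$, the denominator $\delta+u^2+(Hu)^2$ can be arbitrarily large, so the good dissipation $\int \gamma|\Lambda v|^2\,\dx$ cannot absorb the commutator bound $\|u\|_{\dot{H}^{1/2}}\|\Lambda v\|_{L^2}^2$ directly. To overcome this I would run a continuity argument: fix $\eta>0$ small compared with $c_0$ and $\|u_0\|_{\dot{H}^{1/2}}$, choose $\varepsilon_0$ so that $\|v(0)\|_{\dot{H}^{1/2}}\leq \eta/2$ for every $\varepsilon\in(0,\varepsilon_0]$, and let $T^*$ denote the maximal time with $\|v\|_{\dot{H}^{1/2}}\leq\eta$. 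On $[0,T^*]$ I would split $V=V(u_{0,\varepsilon})+(V-V(u_{0,\varepsilon}))$ and similarly for $\gamma$: the smooth background piece is controlled by $\varepsilon$-dependent but $\delta$-uniform quantities that are integrable against short time, while the rough perturbation inherits a prefactor $\lesssim\|v\|_{\dot{H}^{1/2}}\leq\eta$ from Lemma~\ref{L:2.3}, which, combined with the interpolation $\|\Lambda v\|_{L^2}^2\leq \|v\|_{\dot{H}^{1/2}}\|v\|_{\dot{H}^{3/2}}$ and Young's inequality, is absorbed into a combination of the good dissipation and the $\delta$-dissipation. Integrating over $[0,T]$ with $T\leq T(\varepsilon)$ so small that the accumulated right-hand side — of order $\|v(0)\|_{\dot{H}^{1/2}}^2 + T\kappa(\varepsilon)^2 + \text{(lower order)}$ — stays strictly below $\eta^2$ closes the bootstrap and produces \eqref{z1} with an $\mathcal{F}(\tau)$ that vanishes as $\tau\to 0$.
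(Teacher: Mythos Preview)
Your setup --- pairing with $\Lambda v$, commuting via $H^*=-H$, and running a bootstrap on $\lA v\rA_{\dot H^{1/2}}$ --- matches the paper's outline, and your observation that $\lA v(0)\rA_{\dot H^{1/2}}\to 0$ as $\varepsilon\to 0$ uniformly in $\delta$ is exactly the profile-dependent smallness driving the argument. But the absorption mechanism you propose for the convective commutator has a genuine gap that destroys uniformity in~$\delta$.

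Concretely: your commutator bound via \eqref{43} and Lemma~\ref{L:2.3} produces a term of size $\lesssim_{c_0}\lA u\rA_{\dot H^{1/2}}\lA \Lambda v\rA_{L^2}^2$, and you correctly note that the degenerate dissipation $\int_\xS \gamma|\Lambda v|^2\dx$ cannot swallow $\lA \Lambda v\rA_{L^2}^2$. Your fix for the perturbation piece is to interpolate $\lA \Lambda v\rA_{L^2}^2\le \lA v\rA_{\dot H^{1/2}}\lA v\rA_{\dot H^{3/2}}$ and absorb into the $\delta$-dissipation. But any use of Young's inequality sending $\lA v\rA_{\dot H^{3/2}}$ into $\delta\lA v\rA_{\dot H^{3/2}}^2$ leaves a residual factor of $\delta^{-1}$ (or $\delta^{-1/2}$ after time integration via Cauchy--Schwarz), so the resulting bound blows up as $\delta\to 0$. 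The $\delta$-Laplacian is a technical regularization, not a structural feature of the equation; leaning on it to close the main nonlinear estimate cannot yield a $\delta$-uniform lifespan. The same defect hits the background piece: even with $V(u_{0,\varepsilon})$ smooth, the commutator $[V(u_{0,\varepsilon}),H]\Lambda v$ paired against $\Lambda v$ still carries a factor $\lA \Lambda v\rA_{L^2}$ that you have no $\delta$-uniform way to control, so ``integrable against short time'' does not apply.

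The paper's resolution is to rebuild the commutator estimate so that its output is already measured in the weighted quantity $W=\sqrt{\gamma}\,\Lambda v$, never passing through $\lA \Lambda v\rA_{L^2}$. This requires two ingredients you do not invoke. First, a physical-space splitting $H=H_{\kappa,1}+H_{\kappa,2}$ at scale $\kappa$, so that the far piece $H_{\kappa,2}\Lambda$ is order zero and its contribution can be written as $\int (V/\sqrt{\gamma})(H_{\kappa,2}\Lambda v)\,W\dx$, controlled by $\lA V/\sqrt{\gamma}\rA_{L^4}\lesssim_{c_0}\lA u\rA_{\dot H^{1/2}}$ and $\lA W\rA_{L^2}$. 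Second, for the near piece, a direct kernel computation
\[
[V,H_{\kappa,1}]\Lambda v(x)=-\frac{1}{4\pi}\pv\int \frac{(\delta_\alpha V)(x)}{\sqrt{\gamma(x-\alpha)}}\,W(x-\alpha)\,\chi\Big(\frac{\alpha}{\kappa}\Big)\frac{\dalpha}{\tan(\alpha/2)},
\]
combined with a \emph{pointwise} bound $|\delta_\alpha V|\lesssim_{c_0}\rho^{-2}\,Q_\alpha(u)(1+Q_\alpha(u))^2$ (where $Q_\alpha(u)=|\delta_\alpha u|+|\delta_\alpha Hu|$ and $\rho^2=\delta+u^2+(Hu)^2$) that exploits the specific algebraic structure of $V$ and $\gamma$. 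After Cauchy--Schwarz in $\alpha$ and the Gagliardo representation of $\dot H^{1/2}$, this converts the commutator term into $\big(\lA v\rA_{\dot H^{1/2}}(1+\lA v\rA_{\dot H^{1/2}})^2(1+\lA u\rA_{\dot H^{1/2}})+K(\varepsilon)\kappa^{1/4}\lA u\rA_{\dot H^{1/2}}^2\big)\lA W\rA_{L^2}^2$, which \emph{is} absorbable by the degenerate dissipation once $\varepsilon$ and then $\kappa$ are chosen small. This kernel-level rewriting with the weight $\sqrt{\gamma}$ inserted \emph{inside} the commutator is the central device you are missing.
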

\begin{proof}Hereafter, $C$ denotes various constants which depend only on the constant $c_0$ (remembering that $c_0$ is some given constant such that $\inf u(t,x)\ge \inf u_0\ge c_0$) and we use the notation $A\les_{c_0} B$ to indicate that $A\le CB $ for such a constant $C$.

Consider a parameter $\kappa\in (0,1]$ whose value is to be determined. Then decompose the Hilbert transform as 
$H=H_{\kappa,1}+H_{\kappa,2}$ where
\begin{align*}
	H_{\kappa,1}g(x)&=\frac{1}{2\pi}\int_\xS  g(x-\a)\chi\(\frac{\alpha}{\kappa}\)\frac{\dalpha}{\tan(\alpha/2)},\\
	H_{\kappa,2}g(x)&=\frac{1}{2\pi}\int_\xS  g(x-\a)\(1-\chi\(\frac{\alpha}{\kappa}\)\)
	\frac{\dalpha}{\tan(\alpha/2)},
\end{align*}
for some cut-off function $\chi\in C^\infty$ satisfying 
$\chi=1$ in $[-1,1]$ and $\chi=0$ in $\mathbb{R}\setminus[-2,2]$.

Multiply equation \e{4} by $\Lambda v$ and then integrate over $\xS$, 
to obtain
\be\label{9}
\mez{\fract}\lA v\rA_{{\dot{H}^\mez}}^2+
\int_\xS \gamma(\Lambda v)^2\dx =A+B+R
\ee
where
\begin{align*}
A&=\int_\xS V(H_{\kappa,1}\Lambda v)( \Lambda v)\dx,\\
B&=\int_\xS V(H_{\kappa,2}\Lambda v)( \Lambda v)\dx,\\
R&=\int_\xS R(u,u_{0,\eps})( \Lambda v)\dx.
\end{align*}
Set
$$
W\defn\sqrt{\gamma}\Lambda v,
$$
so that the dissipative term in \e{9} is of the form
$$
\int_\xS \gamma(\Lambda v)^2\dx=\int_\xS W^2\dx.
$$

{\em Step 1: estimate of $B$ and $R$.} 
Directly from the definition of 
$\gamma$ and $V$, we have
\be\label{21}
\gamma\le \frac{1}{\pi c_0}\quad 
,\quad \la V\ra\le \frac{1}{2\pi c_0}.
\ee
One important feature of the critical problem is that the dissipative term is degenerate. This means that the coefficient $\gamma$ is not bounded from below by a fixed positive constant. 
As a result, we do not control the $L^2$-norm of 
$\Lambda v$. 
Instead, we merely control the $L^2$-norm of $W=\sqrt{\gamma}\Lambda v$. Therefore, we will systematically write $\Lambda v$ under the form
$$
\Lambda v=\frac{1}{\sqrt{\gamma}}\sqrt{\gamma}\Lambda v=\frac{1}{\sqrt{\gamma}}W.
$$

To absorb the contribution of the factor $1/\sqrt{\gamma}$ in the estimates for $B$ and $R$, it will be sufficient to notice that we have the pointwise bound
$$
\la \frac{V}{\sqrt{\gamma}}\ra=\frac{1}{u\sqrt{\delta +u^2+(Hu)^2}}\la Hu\ra
\les_{c_0}\la Hu\ra.
$$

In particular, remembering that the Hilbert transform is bounded from $L^p(\xS)$ to $L^p(\xS)$ for any $p\in (1,+\infty)$ and using the Sobolev embedding $H^s(\xS)\subset L^{2/(1-2s)}(\xS)$, we deduce that
\be\label{41}
\lA V/\sqrt{\gamma}\rA_{L^4}\les_{c_0}\lA Hu\rA_{L^4}\les_{c_0} \lA u\rA_{L^4}\les_{c_0} \lA u\rA_{\dot{H}^\mez}.
\ee
Then it follows 
from H\"older's inequality that
\begin{align*}
\la B\ra&\le \int_\xS \frac{V}{\sqrt{\gamma}}\la H_{\kappa,2}\Lambda v \ra 
\la \sqrt{\gamma}  \Lambda v\ra\dx\les_{c_0} \lA u\rA_{\dot{H}^\mez}\lA H_{\kappa,2}\Lambda v \rA_{L^4}\lA W\rA_{L^2}.
\end{align*}
On the other hand, 
$$
\lA H_{\kappa,2}\Lambda v \rA_{L^4}
=\lA H_{\kappa,2}\partial_x H v \rA_{L^4}\les\kappa^{-1}
\lA Hv\rA_{L^4}\les \kappa^{-1}
\lA v\rA_{L^4},
$$
where we used the definition of $H_{\kappa,2}$, noting that $g_x(x-\a)=\partial_\a(g(x)-g(x-a))$ and integrating by parts in $\a$.

By combining the previous estimates, we conclude that
$$
\la B\ra
\les_{c_0} \kappa^{-1}\lA u\rA_{\dot{H}^\mez}
\lA v \rA_{\dot{H}^\mez}\lA W\rA_{L^2}.
$$

The estimate of $R$ is similar. Recall that
\begin{equation*}
	R_\eps(u,u_{0})=-\gamma\Lambda u_{0,\eps}-V\partial_x u_{0,\eps}+\delta \partial_x^2 u_{0,\eps}.
\end{equation*}
To estimate the contribution of the first term, we write
$$
\la \int_\xS \gamma(\Lambda u_{0,\eps})( \Lambda v)\dx\ra 
\le \lA \sqrt{\gamma}\rA_{L^\infty}\lA \Lambda u_{0,\eps}\rA_{L^2}
\lA \sqrt{\gamma}\Lambda v\rA_{L^2}
\les_{c_0}\eps^{-\frac{1}{2}}\lA u_0\rA_{\dot{H}^\mez}\lA W\rA_{L^2},
$$
where we have used the elementary inequality
$$
\lA u_{0,\eps}\rA_{\dot{H}^1}\les (\eps+\delta)^{-\frac{1}{4}}
\lA u_{0,\eps}\rA_{\dot{H}^\mez}\les 
\eps^{-\frac{1}{4}}
\lA u_{0}\rA_{\dot{H}^\mez} ,
$$
since the Fourier transform of $u_{0,\eps}=e^{(\eps+\delta)\partial_x^2}u_0$ is essentially localized in the interval $\la \xi\ra\les \sqrt{\eps+\delta}$.
With regards to the second term, we use again the 
estimate \e{41} to get
\begin{align*}
\la \int_\xS V(\partial_x u_{0,\eps})( \Lambda v)\dx\ra 
&\le \lA V/\sqrt{\gamma}\rA_{L^4}\lA \partial_x u_{0,\eps}\rA_{L^4}
\lA \sqrt{\gamma}\Lambda v\rA_{L^2}\\
&\les_{c_0}\eps^{-1}\lA u\rA_{\dot{H}^\mez}\lA u_0\rA_{\dot{H}^\mez}\lA W\rA_{L^2}.
\end{align*}
Eventually, 
we have
$$
\lA \delta \partial_x^2 u_{0,\eps}\rA_{\dot{H}^\mez}=
\lA \delta \partial_x^2 e^{(\eps+\delta)\partial_x^2}u_{0}\rA_{\dot{H}^\mez}\les 
\lA \delta \partial_x^2 e^{\delta\partial_x^2}u_{0}\rA_{\dot{H}^\mez}
\les \lA u_0\rA_{\dot{H}^\mez}.
$$

So, by combining the previous inequalities, we conclude that
$$
B+R
\les_{c_0}\kappa^{-1}\lA u\rA_{\dot{H}^\mez}\lA v\rA_{\dot{H}^\mez}\lA W\rA_{L^2}+\eps^{-\mez}(1+\lA u\rA_{\dot{H}^\mez})
\lA u_{0}\rA_{\dot{H}^\mez}\lA W\rA_{L^2}+\lA u_0\rA_{\dot{H}^\mez},
$$
hence, replacing $u$ by $v+u_{0,\eps}$ in the right-hand side, we conclude that
\be\label{B+R}
\begin{aligned}
B+R& \les_{c_0}\left(\kappa^{-1}\lA u\rA_{\dot{H}^\mez}+\eps^{-\mez}
\lA u_{0}\rA_{\dot{H}^\mez}\right)\lA v\rA_{\dot{H}^\mez}\lA W\rA_{L^2}\\
&\qquad+\eps^{-\mez}
\big(\lA u_{0}\rA_{\dot{H}^\mez}+\lA u_{0}\rA_{\dot{H}^\mez}^2\big)\lA W\rA_{L^2}+\lA u_0\rA_{\dot{H}^\mez}.
\end{aligned}
\ee

{\em Step 2: Estimate of $A$.} By an argument 
parallel to \e{42}, the fact that 
$H_{\kappa,1}^*=-H_{\kappa,1}$ implies that
\begin{align*}
A&=\mez \int_\xS V(H_{\kappa,1}\Lambda v)( \Lambda v)\dx-\mez\int_\xS (\Lambda v)H_{\kappa,1}( V\Lambda v)\dx\\
&=\mez \int_\xS\big(\big[ V,H_{\kappa,1}\big]\Lambda v\big)\Lambda v\dx.
\end{align*}
Then, the Cauchy-Schwarz inequality implies that
\be\label{A}
A\le \left(\int_\xS \gamma^{-1}\bla 
[V,H_{\kappa,1}](\Lambda v)\bra^2\dx\right)^{\frac{1}{2}} \lA \sqrt{\gamma}\Lambda v\rA_{L^2}.
\ee

We now have to estimate the commutator $[V,H_{\kappa,1}]$. 
Previously in \S\ref{S2}, we deduced the 
commutator estimate~\e{7} from~\e{43}. This time, we will proceed directly from the definition of $H_{\kappa,1}$, without a d\'etour by $\text{BMO}$. The main new point is that this will allow us to obtain an estimate in terms of $\lA W\rA_{L^2}$ instead of $\lA \Lambda v\rA_{L^2}$. Namely, directly from the definition of $H_{\kappa,1}$, we have
\begin{align*}
[V,H_{\kappa,1}](\Lambda v)&=
\frac{1}{4\pi}\pv
\int_{\xS} \frac{V(x)(\delta_\a \Lambda v)(x)-\delta_\a (V \Lambda v)(x)}{\tan(\alpha/2)}\chi\(\frac{\alpha}{\kappa}\)\dalpha\\
&=-\frac{1}{4\pi}\pv 
\int_{\xS} \frac{(\delta_\a V)(x)(\Lambda v)(x-\a)}{\tan(\alpha/2)}\chi\(\frac{\alpha}{\kappa}\)\dalpha\\
&=-\frac{1}{4\pi}\pv 
\int_{\xS} \frac{(\delta_\a V)(x)}{\sqrt{\gamma}(x-\a)}W(x-\a)\chi\(\frac{\alpha}{\kappa}\)\frac{\dalpha}{\tan(\alpha/2)}
\end{align*}
where we replaced $\Lambda v$ by $W/\sqrt{\gamma}$ to obtain the last identity.

Therefore,
\begin{align*}
&\int_\xS \gamma^{-1}\bla 
[V,H_{\kappa,1}](\Lambda v)\bra^2\dx\\
&\qquad\les  
\int_\xS(\gamma(x))^{-1}
\left(\int_{|\alpha|\leq 2\kappa} \la \delta_\alpha  V(x)\ra \gamma(x-\alpha)^{-1/2}\la W(x-\alpha)\ra\frac{\dalpha}{|\tan(\alpha/2)|}\right)^2 \dx\\
&\qquad\les_{c_0}
\int_\xS(\rho^2(x)
\left(\int_{|\alpha|\leq 2\kappa} \la \delta_\alpha  V(x)\ra \rho(x-\alpha)\la W(x-\alpha)\ra\frac{\dalpha}{|\tan(\alpha/2)|}\right)^2 \dx.
\end{align*}
\begin{lemma}
Introduce the notation
$$
Q_\alpha(g)\defn |\delta_\alpha g|+|\delta_\alpha Hg|.
$$
Then there holds
\begin{align}\label{e1}
\la\delta_\alpha V\ra \lesssim_{c_0} \frac{Q_\alpha(v)(1+Q_\alpha(v))^2}{
\rho^2}+\eps^{-3}(1+\lA u_0\rA_{H^\mez})^3\la \alpha\ra.
\end{align}
\end{lemma}
\begin{proof} One has
	\begin{align*}
		|\delta_\alpha V(x)|&\lesssim \frac{|\delta_\alpha Hu(x)|}{\rho^2(x)}+|Hu(x-\alpha)|\frac{|\delta_\alpha u(x)|(|u(x)|+|u(x-\alpha)|)}
		{\rho^2(x-\alpha)\rho^2(x)}
		\\
		& \les_{c_0} \frac{|\delta_\alpha Hu(x)|}{\rho^2(x)}+\frac{|\delta_\alpha u(x)|^2+|\delta_\alpha H u(x)|^2}{\rho(x-\alpha)\rho(x)}+\frac{|\delta_\alpha u(x)|+|\delta_\alpha H u(x)|}{\rho(x-\alpha)\rho(x)}.
	\end{align*}
Since
\begin{align*}
	\frac{1}{\rho(x-\alpha)}\lesssim \frac{1}{\rho(x)}(1+|\delta_\alpha u(x)|+|\delta_\alpha Hu(x)|),
\end{align*}
we obtain 
$$
\la\delta_\alpha V\ra \lesssim_{c_0} \frac{Q_\alpha(v)(1+Q_\alpha(v))^2}{
\rho^2}\cdot
$$
To get the wanted result \eqref{e1} from this, we 
replace $u$ by $v+u_{0,\eps}$ and use the two 
following elementary ingredients:
\begin{align*}
&\frac{Q_\alpha(u_{0,\eps})(1+Q_\alpha(u_{0,\eps}))^2}{
\rho^2}\les_{c_0}Q_\alpha(u_{0,\eps})(1+Q_\alpha(u_{0,\eps}))^2
\\&Q_\alpha(u_{0,\eps})\le 
\big(\lA \partial_xu_{0,\eps}\rA_{L^\infty}
+\lA \partial_x (Hu_{0,\eps})\rA_{L^\infty} \big)\la \a\ra\les 
\lA u_{0,\eps}\rA_{H^{2}}\la \a\ra\les 
\eps^{-\frac{3}{4}}\lA u_{0}\rA_{H^\mez}\la \a\ra.
\end{align*}
Since $\la \alpha\ra^3\les \la\alpha\ra$, 
this completes the proof.
\end{proof}

Set $K(\eps)\defn 
\eps^{-3}(1+\lA u_{0,\eps}\rA_{H^\mez})^3$. 
It follows from the previous lemma and the preceding inequality that
\begin{align*}
&\int_\xS \gamma^{-1}\bla 
[V,H_{\kappa,1}](\Lambda v)\bra^2\dx
\lesssim_{c_0} (I)+(II),
\end{align*}
where
\begin{align*}
&(I)\defn 
\underset{\xS}{\int}\rho(x)^2\bigg(\underset{|\alpha|\leq 2\kappa}{\int}  \frac{Q_\alpha(v)(x)(1+Q_\alpha(v)(x))^2}{\rho(x)^2}
\rho(x-\alpha)\la W(x-\alpha)\ra
\frac{\dalpha}{|\tan(\frac{\alpha}{2})|}\bigg)^2\dx,\\
&(II)\defn K(\varepsilon)^2\underset{\xS}{\int} \rho(x)^2\bigg(\underset{|\alpha|\leq 2\kappa}{\int}  \rho(x-\alpha)\la W(x-\alpha)\ra \dalpha\bigg)^2\dx.
\end{align*}
Using the Cauchy-Schwarz inequality, we see that
$$
(I)\les \bigg(\iint_{\xS^2} Q_\alpha(v)(x)^2(1+Q_\alpha(v)(x))^4\frac{\rho^2(x-\alpha)}{\rho^2(x)} \frac{\dalpha\dx}{|\alpha|^2}\bigg)\lA W\rA_{L^2}^2.
$$
Since
\begin{align*}
	\frac{\rho^2(x-\alpha)}{\rho^2(x)}\lesssim 1+|Q_\alpha (u)(x)|^2,
\end{align*}
we end up with
$$
(I)\les \bigg(\iint_{\xS^2} Q_\alpha(v)(x)^2(1+Q_\alpha(v)(x))^4(1+Q_\alpha(u)(x))^2 \frac{\dalpha\dx}{|\alpha|^2}\bigg)\lA W\rA_{L^2}^2.
$$
On the other hand, 
\begin{align*}
 &\iint_{\xS^2} Q_\alpha(v)(x)^2(1+Q_\alpha(v)(x))^4(1+Q_\alpha(u)(x))^2 \frac{\dalpha\dx}{|\alpha|^2}\\&\lesssim
 \iint_{\xS^2} Q_\alpha(v)(x)^2(1+Q_\alpha(v)(x))^4 \frac{\dalpha\dx}{|\alpha|^2}\\&+\left(\iint_{\xS^2} Q_\alpha(v)(x)^4(1+Q_\alpha(v)(x))^8 \frac{\dalpha\dx}{|\alpha|^2}\right)^{\mez}\left(\iint_{\xS^2} Q_\alpha(u)(x)^4 \frac{\dalpha\dx}{|\alpha|^2}\right)^{\mez}\\&\lesssim  \lA v\rA_{\dot H^{\mez}}^2
\big(1+\lA v\rA_{\dot H^{\mez}}\big)^4\big(1+\lA u\rA_{\dot H^{\mez}})^2,
\end{align*}
where we used the fact that $Q_\alpha(f)(x)=|\delta_\alpha f(x)|+|\delta_\alpha Hf(x)|$ and 
\begin{align*}
\iint_{\xS^2} \Big(|\delta_\alpha f(x)|^{2\gamma}+|\delta_\alpha Hf(x)|^{2\gamma}\Big) \frac{\dalpha\dx}{|\alpha|^2}\lesssim ||f||_{\dot H^{\frac{1}{2}}}^{2\gamma}
\end{align*}
for any $\gamma\geq 1$.

This gives
$$
(I)\les \lA v\rA_{\dot H^{\mez}}^2
\big(1+\lA v\rA_{\dot H^{\mez}}\big)^4\big(1+\lA u\rA_{\dot H^{\mez}})^2\lA W\rA_{L^2}^2.
$$

On the other hand, 
\begin{align*}
(II)\les K(\varepsilon)^2\kappa^{\frac{1}{2}}\lA u\rA_{\dot H^{\mez}}^4 \lA W\rA_{L^{2}}^2
\end{align*}

Therefore, it follows from \e{A} that
\begin{align*}
A&\le \left(\int_\xS \gamma^{-1}\bla 
[V,H_{\kappa,1}](\Lambda v)\bra^2\dx\right)^{\frac{1}{2}} \lA W\rA_{L^2}\\
&\les \big( (I)+(II)\big)^\mez \lA W\rA_{L^2}\\
&\les 
\lA v\rA_{\dot H^{\mez}}
\big(1+\lA v\rA_{\dot H^{\mez}}\big)^2\big(1+\lA u\rA_{\dot H^{\mez}})\lA W\rA_{L^2}^2
+K(\varepsilon)\kappa^{\frac{1}{4}}\lA u\rA_{\dot H^{\mez}}^2 \lA W\rA_{L^{2}}^2.
\end{align*}

By combining this with \e{B+R}, we get from \e{9} that there exists a constant $C$ depending only on $c_0$ such that
\be\label{v-1}
\begin{aligned}
\mez{\fract}\lA v\rA_{{\dot{H}^\mez}}^2+
\lA W\rA_{L^2}^2
&\le 
C\lA v\rA_{\dot H^{\mez}}
\big(1+\lA v\rA_{\dot H^{\mez}}\big)^2\big(1+\lA u\rA_{\dot H^{\mez}})\lA W\rA_{L^2}^2\\
&\quad+CK(\varepsilon)\kappa^{\frac{1}{4}}\lA u\rA_{\dot H^{\mez}}^2 \lA W\rA_{L^{2}}^2\\
&\quad +C\left(\kappa^{-1}\lA u\rA_{\dot{H}^\mez}
+C\eps^{-\mez}
\lA u_{0}\rA_{\dot{H}^\mez}\right)\lA v\rA_{\dot{H}^\mez}\lA W\rA_{L^2}\\
&\quad +C\eps^{-\mez}
\Big(\lA u_{0}\rA_{\dot{H}^\mez}+\lA u_{0}\rA_{\dot{H}^\mez}^2\Big)\lA W\rA_{L^2}.
\end{aligned}
\ee

Using the Young's inequality, 
this immediately implies that an inequality of the form
\be\label{v-2}
\mez{\fract}\lA v\rA_{{\dot{H}^\mez}}^2+
\Upsilon\lA W\rA_{L^2}^2
\le M\lA v\rA_{\dot{H}^\mez}^2
+F,
\ee
where
\begin{align*}
M&\defn 4 C^2\eps^{-1}\Big(\lA u_{0}\rA_{\dot{H}^\mez}+\lA u_{0}\rA_{\dot{H}^\mez}^2\Big)^2,\\
F&\defn 4C^2\eps^{-1}\Big(\lA u_{0}\rA_{\dot{H}^\mez}+\lA u_{0}\rA_{\dot{H}^\mez}^2\Big)^2,\\
\Upsilon&\defn 
\frac{1}{4} -C\lA v\rA_{\dot H^{\mez}}
\big(1+\lA v\rA_{\dot H^{\mez}}\big)^2\big(1+\lA u\rA_{\dot H^{\mez}})\\
&\quad-CK(\varepsilon)\kappa^{\frac{1}{4}}\lA u\rA_{\dot H^{\mez}}^2 \\
&\quad-C^2\left(\kappa^{-1}\lA u\rA_{\dot{H}^\mez}+\eps^{-\mez}
\lA u_{0}\rA_{\dot{H}^\mez}\right)^2\lA v\rA_{\dot{H}^\mez}^2.
\end{align*}
In particular, as long as $\Upsilon\ge 0$, we have
$$
\lA v(t)\rA_{\dot{H}^\mez}^2\le e^{2Mt}\lA v(0)\rA_{\dot{H}^\mez}^2+\frac{e^{tM}-1}{M}F.
$$
If one further assumes that $tM\le 1$, it follows that
$$
\lA v(t)\rA_{\dot{H}^\mez}^2\le e^{2Mt}\lA v(0)\rA_{\dot{H}^\mez}^2+tF.
$$

Introduce the parameter 
$$
\nu(\varepsilon)\defn 2\lA v\arrowvert_{t=0}\rA_{\dot{H}^\mez(\xS)}= 2\lA u_0-e^{\eps\partial_x^2}u_{0}\rA_{\dot{H}^\mez(\xS)}.
$$
Then choose $\eps$ small enough, so that
$$
C\nu (\eps)
\big(1+\nu(\eps)\big)^2\big(1+2\lA u_0\rA_{\dot H^{\mez}})\le \frac{1}{16}.
$$
We 
then fix $\kappa$ small enough to that
$$
CK(\varepsilon)\kappa^{\frac{1}{4}}(2\lA u_0\rA_{\dot H^{\mez}})^2
\le \frac{1}{16},
$$
where recall that $K(\eps)\defn \eps^{-3}(1+\lA u_{0,\eps}\rA_{H^\mez})^3$.

We then deduce the wanted uniform estimate by an elementary continuation argument.
\end{proof}

\subsection{Compactness}\label{S:compactness}

Previously, we have proved {\em a priori} estimates  for the spatial derivatives. In this paragraph, we collect results from which we will derive estimates for the time derivative as well as for the nonlinearity. These estimates are used to pass to the limit in 
the equation.

Recall the notations introduced in the previous section, as well as the estimates proved there. Fix $c_0>0$. Given $\delta\in (0,1]$ and an initial data $u_0\in H^\mez(\xS)$ satisfying $u_0\ge c_0$, we have seen  that there exists a (global in time) solution $u_\delta$ to the Cauchy problem:
\begin{equation}\label{A3b}
\left\{
\begin{aligned}
&\partial_tu_\delta+\frac{1}{\pi}\frac{u_\delta\Lambda u_\delta-(Hu_\delta)\partial_x u_\delta}{\delta+u_\delta^2+(Hu_\delta)^2}-\delta \partial_x^2u_\delta=0,\\
&u_\delta\arrowvert_{t=0}=e^{\delta \partial_x^2} u_0.
\end{aligned}
\right.
\end{equation}
Moreover, we have proved that one can fix $\eps$ small enough 
such that one can write $u_\delta$ under the form
$$
u_\delta(t,x)=(e^{(\eps+\delta)\partial_x^2}u_0)(x)
+v_{\delta}(x),
$$
and there exist $T>0$ and $M>0$ depending on $u_0$ such that, for all $\delta\in (0,1]$,
\begin{align}\label{z1b}
\sup_{t\in [0,T]} \lA v_{\delta}(t)\rA_{\dot H^{\frac{1}{2}}}^2
+\int_{0}^{T}\int_{\xS}\frac{u_\delta\la\Lambda v_{\delta}\ra^2}{\delta+u_\delta^2+(Hu_\delta)^2}\dx\dt+\delta \int_{0}^{T}
\lA v_\delta\rA_{\dot{H}^\tdm}^2\dt\le M.
\end{align}

Now, to pass to the limit in the equation \e{A3b}, we need to extract some uniform estimates for the time derivative. Since $\partial_tu_\delta=\partial_tv_\delta$, it is sufficient to estimate the latter quantity. 
It is given by
\be\label{4b}
\partial_tv_\delta=-V_\delta\partial_x v_\delta
-\gamma_\delta\Lambda v_\delta+\delta\partial_x^2 v_\delta
+R_\delta,
\ee
where 
\begin{align*}
\gamma_\delta&=\frac{1}{\pi}\frac{u_\delta}{\delta+u_\delta^2+(Hu_\delta)^2}\quad, \quad	V_\delta=-\frac{1}{\pi}\frac{Hu_\delta}{\delta+u_\delta^2+(Hu_\delta)^2},\\
R_\delta&=-\gamma_\delta(\Lambda e^{(\eps+\delta)\partial_x^2}u_0)-V_\delta(\partial_x e^{(\eps+\delta)\partial_x^2}u_0)+\delta \partial_x^2 e^{(\eps+\delta)\partial_x^2}u_0.
\end{align*}
As already seen in \e{21}, we have
$\gamma_\delta\les_{c_0} 1$ and $\la V_\delta\ra\les_{c_0} 1$. 
By combining this with the fact that 
$e^{\eps\partial_x^2}$ is a smoothing operator, 
we immediately see that
$$
\lA R_\delta\rA_{L^\infty([0,T];L^2)}\les_{c_0,\eps}\lA u_0\rA_{\dot{H}^\mez}.
$$
Here the implicit constant depends on $\eps$, but this is harmless since $\eps$ is fixed now. On the other, directly from \e{z1b}, we get that
$$
\lA \gamma_\delta\Lambda v_\delta\rA_{L^2([0,T];L^2)}
\les_{c_0}\lA \sqrt{\gamma_\delta}\Lambda v_\delta\rA_{L^2([0,T];L^2)}\les_{c_0}M,
$$
and 
$$
\delta\lA \partial_x^2 v_\delta\rA_{L^2([0,T];H^{-\mez})}\le \sqrt{\delta}\lA \partial_x^2 v_\delta\rA_{L^2([0,T];H^{-\mez})}
\le M.
$$
It remains only to estimate the contribution of $V_\delta\partial_x v_\delta$. 
For this, we begin by proving that $(v_\delta)_{\delta\in (0,1]}$ is bounded in $L^p([0,T];\dot{H}^1(\xS))$ for any $1\le p<2$. Indeed, we can write
\begin{align*}
\lA v_{\delta}\rA_{\dot{H}^1}^2&\le  \lA\frac{u_{\delta}^2+(Hu_{\delta})^2}{u_{\delta}}
\rA_{L^\infty}
\int_\xS \frac{u_{\delta} \la \Lambda v_{\delta}\ra^2}{u_{\delta}^2+(Hu_{\delta})^2}\dx\\
&\les_{c_0}\lA (u_{\delta},Hu_{\delta})\rA_{L^\infty}^2
\int_\xS \frac{u_{\delta} \la \Lambda v\ra^2}{u_{\delta}^2+(Hu_{\delta})^2}\dx
\\
&\lesssim_{c_0} \Big(\lA (v_{\delta},Hv_{\delta})\rA_{L^\infty}^2+\eps^{-\mez}\lA u_0\rA_{H^{\mez}}^2\Big)
\int_\xS \frac{u_{\delta} \la \Lambda v\ra^2}{u_{\delta}^2+(Hu_{\delta})^2}\dx
\\
&\lesssim_{c_0} \Big(\lA v_{\delta}\rA_{\dot{H}^{\mez}}^2\log\big(2+\lA v_{\delta}\rA_{\dot{H}^1}\big)+\varepsilon^{-\mez}
\lA u_0\rA_{H^{\mez}}^2\Big)
\int_\xS \frac{u_{\delta} \la \Lambda v_{\delta}\ra^2}{u_{\delta}^2+(Hu_{\delta})^2}\dx,
\end{align*}
to conclude that
\begin{align*}
\frac{\lA v_{\delta}\rA_{\dot{H}^1}^2}{\log(2+\lA v_{\delta}\rA_{\dot{H}^1})}
\lesssim_{c_0,\varepsilon,\lA u_0\rA_{H^{\mez}}}
\int_\xS \frac{u_{\delta} |\Lambda v_{\delta}|^2}{u_{\delta}^2+(Hu_{\delta})^2}\dx.
\end{align*}
Remembering that $\la V_\delta\ra\les_{c_0} 1$, it immediately follows that, for any $p\in [1,2)$,
$$
\lA V_\delta\partial_x v_\delta\rA_{L^p([0,T];L^2)}
\les M.
$$

Now, by combining all the previous estimates, we see that
$(v_\delta)_{\delta\in (0,1]}$ is bounded in the space
$$
X_p=\left\{ u\in C^0([0,T];H^\mez(\xS)\cap L^p([0,T];H^1(\xS))\,;\, \partial_t u\in 
L^p([0,T];H^{-\mez}(\xS))\right\}.
$$
Since $H^\mez(\xS)$ (resp.\ $H^1(\xS)$) 
is compactly embedded into $H^s(\xS)$ (resp.\ $H^{\mez+s}(\xS)$ 
for any $s<1/2$, 
By the classical Aubin-Lions lemma, this in turn implies that one extract a sequence $(u_{\delta_n})_{n\in \xN}$ which converges strongly in
$$
C^0([0,T];H^s(\xS)\cap L^p([0,T];H^{\mez+s}(\xS)).
$$
Then it is elementary to pass to the limit in the equation.

\section{Smoothing effect}\label{S4}
The goal of this section is to prove the second statement in Theorem~\ref{T1} which asserts that the solution are smooth. 
By classical methods for parabolic equations (see \cite[Chapter 15]{Taylor3}), 
it is easy to prove that solutions which are smooth enough (say with initial data in $H^2(\xS)$) are $C^\infty$ for positive time. So it is sufficient to prove that the solution are at least $H^2$ for positive times. This is the purpose of the following
\begin{proposition} The solution $u$ constructs in the previous  section is such that, for any $\xi>4$, there exists a constant $C=C(\xi)$ such that 
\be\label{z2}
\sup_{t\in [0,T]}	t^{\xi}\lA u(t)\rA_{\dot H^{\frac52}}^2 <+\infty.
\ee
\end{proposition}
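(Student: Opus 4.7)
The plan is to establish \eqref{z2} by proving uniform-in-$\delta$ smoothing estimates for the approximate solutions $u_\delta$ of \eqref{A3b}, and then passing to the limit with the compactness tools already set up in \S\ref{S:compactness}. Concretely, I would prove by induction on $k \in \{1,2,3,4\}$ that there exist constants $C_k$ and exponents $\beta_k$ with $\beta_0 = 0$ and $\beta_k - \beta_{k-1} > 1$, such that
\[
\sup_{t\in [0,T]} t^{\beta_k}\lA u_\delta(t)\rA_{\dot H^{\frac{1}{2}+\frac{k}{2}}}^2 \le C_k,
\]
uniformly in $\delta \in (0,1]$. Specializing to $k=4$ and choosing $\beta_4$ arbitrarily close to $4$ from above produces \eqref{z2} for any $\xi > 4$.

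The base of the induction ($k=0$) is Lemma \ref{L:3.2}. The first step ($k=1$) is essentially contained at the end of \S\ref{S:compactness}: the logarithmic interpolation $\lA (u_\delta, Hu_\delta)\rA_{L^\infty}^2 \les_{c_0} \lA u_\delta\rA_{\dot H^{1/2}}^2 \log(2 + \lA u_\delta\rA_{\dot H^1}) + \varepsilon^{-1/2}\lA u_0\rA_{H^{1/2}}^2$, combined with the coercive term in \eqref{z1b}, yields $u_\delta \in L^p_t \dot H^1$ for every $p \in [1,2)$. From this, for every $\tau \in (0,T]$ there exists $t_\tau \in (\tau/2,\tau)$ with $\lA u_\delta(t_\tau)\rA_{\dot H^1}^2 \lesssim \tau^{-1}$; a short-time energy estimate then propagates this to $[t_\tau,T]$ and, taking $\tau \to t$, produces the pointwise bound $t^{\beta_1}\lA u_\delta(t)\rA_{\dot H^1}^2 \le C_1$ for any $\beta_1 > 1$.

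The inductive step $k \to k+1$ is a time-weighted energy estimate at level $\dot H^{1/2+(k+1)/2}$. Applying $\Lambda^{(k+1)/2}$ to the equation $\partial_t u_\delta + V_\delta \partial_x u_\delta + \gamma_\delta \Lambda u_\delta - \delta\partial_x^2 u_\delta = 0$ and testing against $t^{\beta_{k+1}} \Lambda^{(k+1)/2} u_\delta$, one obtains an identity of the same shape as \eqref{9}, the principal error terms being the commutators $[\Lambda^{(k+1)/2}, \gamma_\delta]\Lambda u_\delta$ and $[\Lambda^{(k+1)/2}, V_\delta]\partial_x u_\delta$. These are controlled by combining four ingredients already present in the paper: the systematic substitution $\Lambda u_\delta = \gamma_\delta^{-1/2} W_\delta$ from \S\ref{S3}; the pointwise inequality $|V_\delta|/\sqrt{\gamma_\delta} \les_{c_0} |Hu_\delta|$; the composition estimate of Lemma \ref{L:2.3}, extended to higher fractional orders by the smoothness of $(\gamma, V)$ as functions of $(u, Hu)$ on $\{u \ge c_0\}$; and classical Kato--Ponce type commutator estimates for $[\Lambda^s, f]$. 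The contribution of the differentiated prefactor, $\beta_{k+1} t^{\beta_{k+1}-1}\lA u_\delta\rA_{\dot H^{1/2+(k+1)/2}}^2$, is absorbed by interpolating against the inductive bound at level $k$ together with the gained dissipation at level $k+1$, and this is precisely where the constraint $\beta_{k+1} > \beta_k + 1$ enters.

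The main obstacle is that the degeneracy of $\gamma_\delta$ does not fade along the iteration: since $Hu_\delta$ is not uniformly bounded in $L^\infty$, there is no pointwise lower bound on $\gamma_\delta$ even at high regularity, so one must carry the weighted quantity $W_\delta = \sqrt{\gamma_\delta}\,\Lambda u_\delta$ through every step and re-prove an analogue of the commutator estimate \eqref{7-1} at each level. This is also the origin of the $\varepsilon$-loss relative to the sharp parabolic exponent $\xi=4$: the logarithmic interpolation in the first step only converts the weighted dissipation into $L^p_t\dot H^1$ with $p < 2$, and this slight deficit is what forces $\beta_k - \beta_{k-1}$ to be strictly larger than $1$. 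Once the uniform-in-$\delta$ bounds are proved, \eqref{z2} for the limit $u$ follows from lower semicontinuity: $u_{\delta_n}\to u$ strongly in $C([0,T];H^s(\xS))$ for every $s<1/2$ by \S\ref{S:compactness}, and the weighted $\dot H^{5/2}$-norm is preserved in the weak-$*$ limit, which completes the proof of the proposition and, together with the classical parabolic regularity cited at the opening of \S\ref{S4}, of part (ii) of Theorem~\ref{T1}.
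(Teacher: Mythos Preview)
Your proposal takes a genuinely different route from the paper. The paper does \emph{not} bootstrap through intermediate regularities; instead it jumps directly to $\dot H^{5/2}$ by applying the integer derivative $\partial_x^2$ to the equation for $v$, writes $\tilde v=\partial_x^2 v$, and then multiplies by $\Lambda\tilde v$. The advantage of integer differentiation is that the commutator terms are explicit products (Leibniz rule), so the only genuine commutator is again $[V,H_{\kappa,1}]$, handled exactly as in \S\ref{S3}. The crucial output is that, after choosing $\eps$ and $\kappa$ small, the $\dot H^{5/2}$ energy inequality closes with a \emph{constant} right-hand side: $\partial_t\|\tilde v\|_{\dot H^{1/2}}^2+\|\tilde w\|_{L^2}^2\lesssim C$. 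The time weight is introduced only afterwards, by integrating from $s$ to $t$, multiplying by $s^{\xi-1}$, integrating in $s$, converting the weighted dissipation to $\|v\|_{\dot H^3}^{2-\delta}$ via the logarithmic interpolation, and closing with the interpolation $\|u\|_{\dot H^{5/2}}^2\le \|u\|_{\dot H^3}^{8/5}\|u\|_{\dot H^{1/2}}^{2/5}$ and Young's inequality; the constraint $\xi>4$ arises from the Young exponent $q=\tfrac{10-5\delta}{2-5\delta}>5$ minus one.

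There is a real gap in your scheme. In the inductive step you invoke ``classical Kato--Ponce type commutator estimates for $[\Lambda^s,f]$'', but those estimates produce bounds in unweighted $L^2$ and require control of $\Lambda u_\delta$ in $L^2$, which you do not have: the whole point of the degeneracy is that only $\sqrt{\gamma_\delta}\,\Lambda u_\delta$ is controlled. In \S\ref{S3} the paper circumvents this by working with the explicit kernel of $H_{\kappa,1}$ and inserting $\Lambda v=\gamma^{-1/2}W$ \emph{inside the singular integral}; this trick does not transfer to abstract $[\Lambda^s,\cdot]$ commutators for fractional $s$. You acknowledge the need to ``re-prove an analogue of the commutator estimate at each level'' but give no indication of how, and for $s\in\{3/2,2\}$ this is not routine. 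A second gap: your step $k=1$ assumes a ``short-time energy estimate'' that propagates $\dot H^1$ data forward in time, but no such estimate is available in the paper---the only a priori estimate proved is at the $\dot H^{1/2}$ level, and propagating $\dot H^1$ would itself require the very commutator machinery you have not set up. (There is also a bookkeeping slip: testing $\Lambda^{(k+1)/2}$ of the equation against $\Lambda^{(k+1)/2}u_\delta$ yields $\tfrac{d}{dt}\|u_\delta\|_{\dot H^{(k+1)/2}}^2$, not $\tfrac{d}{dt}\|u_\delta\|_{\dot H^{1/2+(k+1)/2}}^2$; you are missing a factor of $\Lambda$ in the multiplier.)
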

\begin{proof}
Since $u$ was constructed as the limit of smooth solutions 
(see \S\ref{S:approximate} and \S\ref{S:compactness}), we will 
prove only {\em a priori} estimates. 

As in the previous part, we work with the function
$$
v=u-e^{\eps\partial_x^2}u_0,
$$
with $\eps$ small enough. 
Recall from \e{4} that $v$ solves
\be\label{4'}
\partial_tv+V\partial_x v+\gamma\Lambda v
=R_\eps(u,u_{0})
\ee
where 
\begin{equation*}
	R_\eps(u,u_{0})=-\gamma\Lambda u_{0,\eps}-V\partial_x u_{0,\eps}.
\end{equation*}

We will estimate the $\dot{H}^2$-norm of $v$. 
For this introduce 
$\tilde{v}=\partial_x^2v$, solution to 
\be\label{tilde4}
\partial_t\tilde{v}+\gamma\Lambda \tilde{v}=V\big(H_{\kappa,1}\Lambda \tilde{v}\big)
+R_0+R_1
\ee
where 
\begin{align*}
	&R_0=\partial_x^2\left(V\left(H_{\kappa,2}\Lambda v\right)\right)+\partial_x^2R_\eps(u,u_{0})\\&
	R_1=-(\partial_x^2\gamma)(\Lambda v)-2(\partial_x\gamma)(\Lambda \partial_xv)+(\partial_x^2V)(H_{\kappa,1}\Lambda v)+2(\partial_x(VH_{\kappa,1}\Lambda \partial_xv)).
\end{align*}
Now, we multiply \e{tilde4} by $\Lambda \tilde{v}$ 
and then integrate in $x$ over $\mathbb{S}$, to obtain
\begin{align*}
\mez\fract \lA \tilde{v}\rA_{\dot H^{\frac{1}{2}}}^2
+\int_\xS \gamma \bla \Lambda\tilde{v}\bra^2 \dx 
&=\frac{1}{2}\int_\xS [V,H_{\kappa,1}](\Lambda \tilde{v}) \Lambda \tilde{v} \dx \\
&\quad+\int_\xS R_1\Lambda \tilde{v} \dx
+\int_\xS R_0\Lambda \tilde{v} \dx.
\end{align*}
Set
$$
\tilde{W}\defn \sqrt{\gamma}\Lambda\tilde{v}.
$$

By using arguments parallel to those used in the first step of the proof of Lemma~\ref{L:3.2}, one finds that
\begin{align*}
\left|\int [V,H_{\kappa,1}](\Lambda \tilde{v}) \Lambda \tilde{v} \ dx \right|\lesssim_{c_0}	\lA v\rA_{\dot H^{1/2}}(1+\lA v\rA_{\dot H^{1/2}})^3||\tilde{w}||_{L^2}^2+	C(\varepsilon)\kappa^{\frac{1}{4}}(\lA v\rA_{\dot H^{1/2}}+1)^2 ||\tilde{w}| |_{L^{2}}^2
\end{align*}
and, 
\begin{align*}
	|\int R_0\Lambda \tilde{v} \ dx|\lesssim_{c_0} C(\varepsilon,\kappa)||\tilde{w}||_{L^2}^{\frac{9}{5}}+C(\varepsilon,\kappa).
\end{align*}
Since
\begin{align*}
&	|\partial_x \gamma|+|\partial_xV|\lesssim \frac{1}{\rho^2}\left(|\partial_x u|+|\Lambda u|\right),\\&
	|\partial_x^2\gamma|+|\partial_x^2V|\lesssim \frac{1}{\rho^2}\left(|\partial_x^2 u|+|H\partial_x^2 u|\right)+\frac{1}{V^{3/2}}\left(|\partial_x u|^2+|\Lambda u|^2\right),
\end{align*}
then one finds 
\begin{align*}
|	R_1|&\lesssim_{c_0} \left( \frac{1}{\rho^2}\left(|\partial_x^2 u|+|H\partial_x^2 u|\right)+\frac{1}{\rho^{3}}\left(|\partial_x u|^2+|\Lambda u|^2\right)\right)\left(\la \Lambda v\ra+|H_{\kappa,1}\Lambda v|\right)\\&+\frac{1}{\rho^2}\left(|\partial_x u|+|\Lambda u|\right)\left(|\Lambda \partial_xv|+|H_{\kappa,1}\Lambda \partial_xv|\right).
\end{align*}
We obtain,
\begin{align*}
	|	R_1|&\lesssim_{c_0} \left( \frac{1}{\rho^2}\left(|\partial_x^2 v|+|H\partial_x^2 v|\right)+\frac{1}{\rho^{3}}\left(|\partial_x v|^2+\la \Lambda v\ra^2\right)\right)\left(\la \Lambda v\ra+|H_{\kappa,1}\Lambda v|\right)\\&+\frac{1}{\rho^2}\left(|\partial_x v|+\la \Lambda v\ra\right)\left(|\Lambda \partial_xv|+|H_{\kappa,1}\Lambda \partial_xv|\right)+C(\varepsilon)\left(\la \Lambda v\ra+|H_{\kappa,1}\Lambda v|+|\Lambda \partial_xv|+|H_{\kappa,1}\Lambda \partial_xv|\right)
\end{align*}
Thus, 
\begin{align*}
	\left|\int R_1\Lambda \tilde{v} \ dx \right|\lesssim_{c_0} 	\lA v\rA_{\dot H^{1/2}}(1+\lA v\rA_{\dot H^{1/2}})^3||\tilde{w}||_{L^2}^2+	C(\varepsilon)\kappa^{\frac{1}{4}}(\lA v\rA_{\dot H^{1/2}}+1)^{4} ||\tilde{w}||_{L^{2}}^2+C(\kappa,\varepsilon)
\end{align*}
Therefore, we find
\begin{align*}
	\partial_t ||\tilde{v}||_{\dot H^{\frac{1}{2}}}^2+\int |\tilde{w}|^2 \ dx &\lesssim_{c_0}	\lA v\rA_{\dot H^{1/2}}(1+\lA v\rA_{\dot H^{1/2}})^3||\tilde{w}||_{L^2}^2+	C(\varepsilon)\kappa^{\frac{1}{4}}(\lA v\rA_{\dot H^{1/2}}+1)^4||\tilde{w}||_{L^{2}}^2+C(\kappa,\varepsilon)
\end{align*}
Choosing $\varepsilon$ and then $\kappa$ small enough, we obtain 
\begin{align*}
	\partial_t ||\tilde{v}||_{\dot H^{\frac{1}{2}}}^2+\int_{\mathbb{S}} |\tilde{w}|^2 \ dx &\lesssim C(c_0,\kappa,\varepsilon),
\end{align*}
for any $t\in (0,T)$. So, integrating in time $\tau \in (s,t)$ we have obtained that,
\begin{align*}
||v(t)||_{\dot H^{\frac{5}{2}}}^2+\int_s^t\int_{\mathbb{S}} \frac{u |\Lambda^3v|^2}{u^2+(Hu)^2} \ dx \ d\tau &\lesssim  C(c_0,\kappa,\varepsilon)+||v(s)||_{\dot H^{\frac{5}{2}}}^2.
\end{align*}

Then, in order to measure the decay rate in time, we multiply the last inequality by $s^{\xi-1}$ and then integrate in $s \in [0,T]$, one finds
\begin{align*}
	 \int_{0}^{T} s^{\xi-1}||v(t)||_{\dot H^{\frac{5}{2}}}^2 \ ds + \int_{0}^{T} s^{\xi-1}\int_s^t\int_{\mathbb{S}} \frac{u |\Lambda^3v|^2}{u^2+(Hu)^2} \ dx \ d\tau \ ds &\lesssim \int_{0}^T s^{\xi-1}C(c_0,\kappa,\varepsilon)+s^{\xi-1}||v(s)||_{\dot H^{\frac{5}{2}}}^2 \ ds.
\end{align*}
Then, 
\begin{align*}
	 \sup_{0<t<T} t^{\xi}||v(t)||_{\dot H^{\frac{5}{2}}}^2+\int_0^ts^\xi\int \frac{u |\Lambda^3v|^2}{u^2+(Hu)^2}&\lesssim T^{\xi} C(c_0,\kappa,\varepsilon)+\int_0^ts^{\xi-1}||v(s)||_{\dot H^{\frac{5}{2}}}^2 \ ds.
\end{align*}
Since 
\begin{align*}
\lA v\rA_{H^3}^2&\lesssim_{c_0} \lA (u,Hu)\rA_{L^\infty}^2
\int \frac{u |\Lambda^3v|^2}{u^2+(Hu)^2}\\&\lesssim_{c_0} (\lA (v,Hv)\rA_{L^\infty}^2+\varepsilon^{-1/2}\lA u_0\rA_{H^{1/2}}^2)
\int \frac{u |\Lambda^3v|^2}{u^2+(Hu)^2} \ dx
\\&\lesssim_{c_0} (\lA v\rA_{H^{1/2}}^2\log(2+\lA v\rA_{H^3})+\varepsilon^{-1/2}\lA u_0\rA_{H^{1/2}}^2)
\int \frac{u |\Lambda^3v|^2}{u^2+(Hu)^2} \ dx.
\end{align*}
Then, using the inequality
\begin{align*}
\frac{\lA v\rA_{H^3}^2}{\log(2+\lA v\rA_{H^3})}\lesssim_{c_0,\varepsilon,\lA u_0\rA_{H^{1/2}}}
\int \frac{u |\Lambda^3v|^2}{u^2+(Hu)^2},
\end{align*}
we find that,
\begin{align*}
	\sup_{0<t<T} t^{\xi}||v(t)||_{\dot H^{\frac{5}{2}}}^2+\int_0^ts^\xi\frac{\lA v\rA_{H^3}^2}{\log(2+\lA v\rA_{H^3})} &\lesssim t^\xi C(c_0,\kappa,\varepsilon)+\int_0^ts^{\xi-1}||v(s)||_{\dot H^{\frac{5}{2}}}^2.
\end{align*}
Then, we use the fact that some $\delta>0$ small enough, $a^{2-\delta} \lesssim \frac{a^2}{log(2+a)},$ 
we find
\begin{align} \label{ine}
	 \sup_{0<t<T}t^{\xi}||v(t)||_{\dot H^{\frac{5}{2}}}^2+\int_0^T s^\xi \Vert u(s) \Vert^{2-\delta}_{\dot H^{3}} &\lesssim T^{\xi+1}+ \int_0^T s^{\xi-1} \Vert u(s) \Vert^{2}_{\dot H^{5/2}} \ ds .
\end{align}

Let $\epsilon>0$ and $\delta_{0}>0$, using the following interpolation inequality 
\begin{align*}
s^{\xi-1}\Vert u(s) \Vert^{2}_{\dot H^{5/2}} \leq \epsilon s^{\xi-1+\delta_{0}}\Vert u(s) \Vert^{8/5}_{\dot H^{3}}\ \epsilon^{-1}s^{-\delta_{0}}\Vert u(s) \Vert^{2/5}_{\dot H^{1/2}}, 
\end{align*}
together with Young's inequality (with the conjugate exponents $p=\frac{10-5\delta}{8}$ and $q=\frac{10-5\delta}{2-5\delta}$) we find
\begin{align*}
\int_{0}^{T} s^{\xi-1}\Vert u(s) \Vert^{2}_{\dot H^{5/2}} \ ds &\leq \frac{\epsilon^p}{p}\int_{0}^{T} s^{p\xi-p+p\delta_{0}} \Vert u(s) \Vert^{2-\delta}_{\dot H^{3}} \ ds \\&+ \frac{1}{q\epsilon^{q}} \sup_{s\in (0,T)}\Vert u(s) \Vert^{2q/5}_{\dot H^{1/2}} \int_{0}^{T} s^{-q\delta_0}  \ ds.
\end{align*}
Therefore, inequality \eqref{ine} becomes
\begin{align} \label{ine2}
	 \sup_{0<t<T}t^{\xi}||v(t)||_{\dot H^{\frac{5}{2}}}^2+\int_0^T s^\xi \Vert u(s) \Vert^{2-\delta}_{\dot H^{3}} \ ds &\lesssim& \nonumber T^{\xi+1}+ \frac{\epsilon^p}{p}\int_{0}^{T} s^{p\xi-p+p\delta_{0}} \Vert u(s) \Vert^{2-\delta}_{\dot H^{3}} \ ds \\
	 && \ + \  \frac{1}{q\epsilon^{q}} \sup_{s\in (0,T)}\Vert u(s) \Vert^{2q/5}_{\dot H^{1/2}} \int_{0}^{T} s^{-q\delta_0}  \ ds.
\end{align}

Choosing $m,\delta_0$ such that $q\delta_0<1$ (so that the last integral in the right hand side is finite) and, for a scaling purpose, we also need that
\begin{align*}
\xi=p\xi-p+p\delta_{0}.
\end{align*}
So, 
\begin{equation*}
\xi=q-\delta_0q>q-1.
\end{equation*}
Then, for any $\xi>4$ (note that $q>4$ for any $\delta>0$) and  any $\epsilon >0$ sufficiently small, we may absorb the $\dot H^3$ in the left hand side of inequality \eqref{ine2}, we obtain
\begin{align*} 
	 \sup_{0<t<T}t^{\xi}||v(t)||_{\dot H^{\frac{5}{2}}}^2+\int_0^T s^\xi \Vert u(s) \Vert^{2-\delta}_{\dot H^{3}}  ds\lesssim 1.
\end{align*}
In particular, for any $\xi>4$
\begin{align*} 
	 \sup_{0<t<T}t^{\xi}||v(t)||_{\dot H^{\frac{5}{2}}}^2\leq C.
\end{align*}
\end{proof}

\section{Uniqueness}\label{S5}
The goal of this section is to prove Theorem \ref{Tu}.  For any data 
$u_0 \in H^{1/2+a}(\mathbb{S})$ we know from Theorem \ref{T1} that $u$ in $ C^0([0,+\infty);\dot{H}^{\frac{1+a}{2}}(\xS))\cap L^2((0,+\infty);\dot{H}^{1+\frac{a}{2}}(\xS))$ and satisfy the following condition 
$$\displaystyle \sup_{t>0} t^{s+\varepsilon_0}|| u(t)||_{\dot H^{\frac{1+a}{2}+s}}<\infty$$ for any $s>0.$
	 We will follow an idea in \cite{CNX-2021}. 
 Let $u_1, u_2$ be two solutions of \eqref{1} with same initial data.  Assume that $u_1,u_2\geq c_0>0$. Set $\overline{u}=u_1-u_2$. Let $x_t, x^t$ satisfy  $\overline{u}(x_t,t) =\displaystyle\sup_{x} \overline{u}(x,t)$ and  $-\overline{u}(x^t,t) =\displaystyle\sup_{x} (-\overline{u})(x,t)$.  By evaluating the evolution equation at $x=x_t$, one finds
 \begin{align*}
 &	\partial_t(\overline{u}(x_t,t))+\frac{1}{\pi}\frac{u_1}{u_1^2+(Hu_1)^2}\ \Lambda \overline{u}\\&=-\frac{1}{\pi }\left(\frac{u_1}{u_1^2+(Hu_1)^2}-\frac{u_2}{u_2^2+(Hu_2)^2}\right)\Lambda u_2+\frac{1}{\pi }\left(\frac{Hu_1}{u_1^2+(Hu_1)^2}-\frac{Hu_2}{u_2^2+(Hu_2)^2}\right)\partial_{x} u_2
\\&\lesssim_{c_0}\left( ||\overline{u}||_{L^\infty_x}+|H\overline{u}(x_t,t)|\right) ||(\Lambda u_2,\partial_{x} u_2)||_{L^\infty_x}.
 \end{align*}
We need to control $|H\overline{u}(x_t,t)|$ by $\Lambda \overline{u}(x_t,t)$ and $||\overline{u}||_{L^\infty_x}$.  For any $\varepsilon\in (0,1/2)$, since $\overline{u}(x_t,t)\geq 0$, we may write 
\begin{align*}
	|H\overline{u}(x_t,t)|\lesssim |\log(\varepsilon)|||\overline{u}||_{L^\infty_x}+\int_{|\alpha|\leq\varepsilon}\delta_\alpha \overline{u}(x_t,t)\frac{d\alpha}{|\alpha|}\lesssim |\log(\varepsilon)|||\overline{u}||_{L^\infty_x}+\varepsilon |\Lambda \overline{u}(x_t,t)|
\end{align*}
 Thus, for any $\varepsilon\in (0,1/2)$
\begin{align*}
	&	\partial_t(\overline{u}(x_t,t))+\frac{1}{\pi}\frac{u_1}{u_1^2+(Hu_1)^2}\ \Lambda \overline{u}\\ &\lesssim_{c_0}  |\log(\varepsilon)| ||(\Lambda u_2,\Lambda u_2)||_{L^\infty_x} ||\overline{u}||_{L^\infty_x}+\varepsilon ||(\Lambda u_2,\partial_{x} u_2)||_{L^\infty_x} \Lambda \overline{u}\\ &  \lesssim_{c_0}   |\log(\varepsilon)| ||(\Lambda u_2,\Lambda u_2)||_{L^\infty_x} ||\overline{u}||_{L^\infty_x}+ \varepsilon (1+||u_1||_{ L^\infty\dot  H^{\frac{1+a}{2}}}) ||(\Lambda u_2,\partial_{x} u_2)||_{L^\infty_x} \frac{u_1}{u_1^2+(Hu_1)^2}\ \Lambda \overline{u}.
\end{align*}
Letting $\varepsilon>0$
\begin{equation*}
	\varepsilon (1+||u_1||_{ L^\infty\dot  H^{\frac{1+a}{2}}}) ||(\Lambda u_2,\partial_{x} u_2)||_{L^\infty_x} \ll 1,
\end{equation*}
thus one gets,
$$
	\partial_t(\overline{u}(x_t,t))\lesssim_{c_0}   \log\left((2+||u_1||_{\dot L^\infty H^{\frac{1+a}{2}}})(1+ ||(\Lambda u_2,\partial_{x} u_2)||_{L^\infty_x})\right) ||(\Lambda u_2,\partial_{x} u_2)||_{L^\infty_x} ||\overline{u}||_{L^\infty_x}.$$
Using similar arguments, it is easy to get that
$$
\partial_t(\overline{u}(x^t,t))\lesssim_{c_0}   \log\left((2+||u_1||_{\dot L^\infty H^{\frac{1+a}{2}}})(1+ ||(\Lambda u_2,\partial_{x} u_2)||_{L^\infty_x})\right) ||(\Lambda u_2,\partial_{x} u_2)||_{L^\infty_x} ||\overline{u}||_{L^\infty}.$$
Thus, we obtain, 
\begin{align*}
	||\overline{u}(t)||_{L^\infty}\lesssim 	||\overline{u}(0)||_{L^\infty}+\int_{0}^{t}\omega(\tau) ||\overline{u}(\tau)||_{L^\infty}d\tau,
\end{align*}
where, 
\begin{align*}
	\omega(t)= \log\left((2+||u_1||_{\dot L^\infty H^{\frac{1+a}{2}}})(1+ ||(\Lambda u_2,\partial_{x} u_2)(t)||_{L^\infty_x})\right) ||(\Lambda u_2,\partial_{x} u_2)(t)||_{L^\infty_x} 
\end{align*}
Note that, for any $a_0\in (0,1)$, 
\begin{align*}
	\int_0^t \omega(\tau) d\tau\lesssim_{a_0}  (2+||u_1||_{\dot L^\infty H^{\frac{1+a}{2}}}) \left[1+\sup_{\tau\in (0,t)}\left( \tau^{1-a_0}||(\Lambda u_2,\partial_{x} u_2)(\tau)||_{L^\infty_x} \right)\right]^2.
\end{align*}
 
By Sobolev's inequality for any $a_0\in (0,a/10)$
\begin{align*}
	\sup_{\tau\in (0,t)}\left( \tau^{a_0}||(\Lambda u_2,\partial_{x} u_2)(\tau)||_{L^\infty_x} \right)&\lesssim t ||u_1||_{\dot L^\infty H^{\frac{1}{2}}}+\sup_{\tau\in (0,t)}\left( \tau^{1-a_0}||u_2(\tau)||_{H^{\frac{3}{2}+\frac{a}{4}}} \right)\\&\lesssim C_t<\infty.
\end{align*}
Therefore, by using Gronwall's inequality, one obtains
\begin{align*}
		||\overline{u}(t)||_{L^\infty}\lesssim ||\overline{u}(0)||_{L^\infty}\exp(C(c_0)\int_0^t\omega(\tau) d\tau)\lesssim C_t||\overline{u}(0)||_{L^\infty}.
\end{align*}
Hence, we proved the wellposedness of \eqref{1} in $H^{\frac{1}{2}+a}$ initial data for any $a>0$.

\section*{Acknowledgments}

\noindent  T.A.\ acknowledges the SingFlows project (grant ANR-18-CE40-0027) 
of the French National Research Agency (ANR).  Q-H.N.\ 
is  supported  by the ShanghaiTech University startup fund and the National Natural Science Foundation of China (12050410257).

\vfill

\begin{flushleft}
\textbf{Thomas Alazard}\\
Universit{\'e} Paris-Saclay, ENS Paris-Saclay, CNRS,\\
Centre Borelli UMR9010, avenue des Sciences, 
F-91190 Gif-sur-Yvette\\
France

and

Department of Mathematics, University of California, Berkeley, \\
Berkeley, CA 94720\\
USA

\vspace{1cm}

\textbf{Omar Lazar}\\
College of Engineering and Technology,\\
American University of the Middle East,\\
Kuwait

and

Departamento de An\'alisis Matem\'atico \& IMUS,\\
Universidad de Sevilla,\\
Spain

\vspace{1cm}

\textbf{Quoc-Hung Nguyen}\\
ShanghaiTech University, \\
393 Middle Huaxia Road, Pudong,\\
Shanghai, 201210,\\
China

\end{flushleft}

\end{document}